\newtheorem{lemma}{Lemma}
\newtheorem{thm}{Theorem}
\newtheorem{definition}{Definition}
\newtheorem{corollary}{Corollary}
\newtheorem{hypp}{Hypothesis}
\numberwithin{equation}{section}
\begin{document}

\begin{center}
\Large{\textbf{Instantaneous gelation and nonexistence for the  Oort-Hulst-Safronov coagulation model}}
\end{center}

\centerline{Pooja Rai${}^1$, Ankik Kumar Giri${}^2{^*}$ and Volker John${}^3$}\let\thefootnote\relax\footnotetext{${^*}$Corresponding author. Tel +91-1332-284818 (O);  Fax: +91-1332-273560  \newline{\it{${}$ \hspace{.3cm} Email address: }}ankik.giri@ma.iitr.ac.in}
\medskip
{\footnotesize

  \centerline{ ${}^{1,2}$Department of Mathematics, Indian Institute of Technology Roorkee,}
   \centerline{ Roorkee-247667, Uttarakhand, India}

\centerline{${}^{3}$Weierstrass Institute for Applied Analysis and Stochastics, Mohrenstr. 39, 10117 Berlin, Germany}
   \centerline{${}^{3}$Freie Universit\"{a}t of Berlin, Department of Mathematics and Computer Science, Arnimallee 6, 14195 Berlin, Germany}

}

\bigskip

%
%

\begin{abstract}
The possible occurrence of instantaneous gelation to Oort-Hulst-Safronov (OHS) coagulation equation is investigated for a certain class of unbounded coagulation kernels. The existence of instantaneous gelation is confirmed by showing the nonexistence of mass-conserving weak solutions. Finally, it is shown that for such kernels, there is no weak solution to the OHS coagulation equation at any time interval.
\end{abstract}

\noindent
{\bf Keywords:} Coagulation; Oort-Hulst-Safronov model; Mass-conservation; Gelation; Instantaneous gelation; Nonexistence.\\

{\rm \bf MSC.} Primary: 45K05, 45G99, 45G10, Secondary: 34K30.\\

\section{Introduction}\label{existintroduction1}
Coagulation, a basic kinetic process, represents the dynamics of particle growth in which two or more particles adhere to form a new larger particle. This process may occur in several physical phenomenon such as fluidized bed granulation, planet formation, polymerization etc. A discrete system of differential equations to describe the coagulation of collides travelling in Brownian motion was first  developed by Smoluchowski \cite{Smoluchowski:1917} which is known as the Smoluchowski coagulation equation (SCE) and its continuous version was later given by M\"uller \cite{Muller:1928}. Each particle is recognized by its size (or volume) and the parameter value of size of each particle is either contained in the set of positive integer numbers $\mathbb{N}\setminus\{0\}$ (for the discrete case) or in the set of positive real numbers $\mathbb{R}_{>0}=(0, +\infty)$ (for the continuous case).  In a distinct sense, a different coagulation equation was proposed by Oort and Hulst \cite{Hulst:1946} that was adopted in astronomy to describe the coagulation of stellar objects when these objects merge irreversibly via binary interactions to configuration of a bigger objects at a specific moment. However, the tractable form of this equation was later given by Safronov \cite{Safronov:1972}. As a result, it is referred to as the Oort-Hulst-Safronov (OHS) coagulation equation. The discrete version of OHS equation is introduced by Dubvoksi\v{i} which was later termed as the Safronov-Dubovski\v{i} coagulation equation, see \cite{Dubovskii:1999, Dubovski:1999II, Bagland:2005}. The nonlinear nonlocal OHS coagulation equation for the evolution of the concentration $\xi(\mu,t)$ of particles of size $\mu \in \mathbb{R}_{>0}$ at time $t\geq0$ is given by

 \begin{align}\label{ConEq1}
\frac{\partial \xi(\mu,t)}{\partial t}  = & -\frac{\partial }{\partial \mu} \bigg( \xi(\mu, t) \int_0^{\mu} \nu \Lambda(\mu, \nu) \xi(\nu, t)\ d\nu \bigg)\nonumber\\
& -\int_{\mu}^{\infty} \Lambda(\mu, \nu) \xi(\mu, t) \xi(\nu, t)\ d\nu,\ \quad (\mu,t)\in \mathbb{R}_{>0}^2,\
\end{align}
with initial condition
\begin{equation}\label{ConEqin1}
\xi(\mu, 0) = \xi^{\mathrm{in}}(\mu)\ge 0,\ \quad \mu\in \mathbb{R}_{>0}.
\end{equation}


 The coagulation kernel $\Lambda(\mu,\nu)$ denotes the intensity force at which particles of size $\mu$ merge irreversibly with particles of size $\nu$ to form the larger particles that is assumed to be nonnegative and symmetric, (that is) i.e., $\Lambda(\mu,\nu)=\Lambda(\nu,\mu)\geq0$ for all $\mu,\nu \in \mathbb{R}_{>0}^2$. In \eqref{ConEq1}, $\frac{\partial}{\partial t}$ and $\frac{\partial}{\partial \mu}$ are the partial derivatives with respect to time and space, respectively.

  According to \cite{Dubovskii:1999}, in OHS equation \eqref{ConEq1}, the aggregation of particles of size $\mu$ with smaller particles alters the size of particles of size $\mu$. Furthermore, the coagulation of particle of size $\mu$ with bigger particles changes the number of particles of size $\mu$. Therefore, the first term on the right-hand side of \eqref{ConEq1} only changes $\xi(\mu,t)$ due to coagulation with small particles whereas the last term indicates the decay of particle of size $\mu$ due to coagulation with bigger particles.

  The total mass of the particles in the whole system, at any time $t\geq0$, is given by
$$\mathcal{M}^{1}(t)=\int_{0}^{\infty}\mu\xi(\mu,t)\ d\mu.$$
Formally, we know that the total mass of particles is neither originated nor ended by any reaction. Therefore, it is expected that the total mass remains conserved throughout time evolution, i.e.,
 \begin{align}\label{MassConseDisCon}
\int_{0}^{\infty}\mu\xi(\mu,t)\ d\mu=\int_{0}^{\infty}\mu\xi^{\mathrm{in}}(\mu)\ d\mu=\varrho_{0},\quad \forall~t\geq0.
 \end{align}
However, for the coagulation kernels growing sufficiently rapidly for large $\mu$, $\nu$ such as $\Lambda (\mu, \nu)= (\mu \nu)^{r/2}$ for $r \in (1,2]$, there is a possibility to have a runaway growth that can lead to the formation of particles of infinite mass in finite time.  These particles of infinite mass are called \emph{infinite gels} which are then removed from the system. As a result, we see that the mass conservation breaks down in finite time, i.e.,
 $$\int_{0}^{\infty}\mu\xi(\mu,t)\ d\mu<\int_{0}^{\infty}\mu\xi^{\mathrm{in}}(\mu)\ d\mu,\quad T_{\mathrm{gel}}<t,$$
 and this phenomenon is known as \emph{gelation}. Here, $T_{\mathrm{gel}}$ is starting time after which the mass conservation breaks down is called the \emph{gelation time} which can mathematically be defined as
\[
 T_{\mathrm{gel}}:= \inf\left\{t\geq 0 \ \ \mbox{such that} \ \ \mathcal{M}^{1}(t)<\mathcal{M}^{1}(0)=\varrho_{0}\right\}.
\]
 If the gelation time $T_{\mathrm{gel}} = 0$, then this phenomenon is known as \emph{instantaneous gelation}.

The gelation phenomenon for the SCE has been significantly discussed in the literature (see \cite{Banasiak} and reference therein). Furthermore, the occurrence of instantaneous gelation for the SCE was first investigated by Dongen \cite{Dongen:1987} but the first rigorous mathematical proof was introduced by Carr \&  da Costa \cite{Carr:1992}. Recently, Banasiak et al.\cite{Banasiak} have supplemented a proof which confirms that the instantaneous gelation takes place for the continuous version of SCE under certain classes of unbounded coagulation kernels. However, there are a very few articles available on the occurrence of gelation for the OHS coagulation equation, see for instance  \cite{Dubovskii:1999}, \cite{Lachowicz:2003}. More recently, Das and Saha \cite{Das:2022} studied the instantaneous gelation result to the discrete version of the OHS coagulation equation with a specific class of coagulation kernels. The idea of the their work is mainly motivated from \cite{Carr:1992}. To the best of our knowledge, the instantaneous gelation result to OHS coagulation equation \eqref{ConEq1}--\eqref{ConEqin1} has not been addressed in the literature till date. Therefore, the purpose of this paper is to investigate the occurrence of instantaneous gelation to \eqref{ConEq1}--\eqref{ConEqin1} under the class of coagulation kernels mentioned in hypothesis $\textbf{(A)}$ given below. In addition, we also show the nonexistence of weak solutions to \eqref{ConEq1}--\eqref{ConEqin1}  with the same class of kernels.

Before outlining the results of this paper, let us discuss the mathematical results available on the solutions to the OHS coagulation equation \eqref{ConEq1}--\eqref{ConEqin1}. Results on the existence, gelation, mass conservation and large time behavior of weak solutions to the OHS equation \eqref{ConEq1}--\eqref{ConEqin1} with unbounded coagulation kernels were first established by Lachowicz et al.\ \cite{Lachowicz:2003}. Moreover, they also established a deep connection between OHS coagulation equation and the continuous version of Smoluchowski coagulation equation by introducing a generalized coagulation equation. Next, in \cite{Bagland:2005}, Bagland  proved that a suitable sequence of solutions of discrete version of OHS equation converges towards a solution of OHS equations \eqref{ConEq1}--\eqref{ConEqin1}. In addition, he has also derived an explicit solution $\xi(t,\mu)=\frac{2}{M(1+t)^{2}}\textbf{1}_{[0,M]}\frac{\mu}{(1+t)}$ to the OHS equations \eqref{ConEq1}--\eqref{ConEqin1} with the coagulation kernel $\Lambda(\mu,\nu)=1$ and initial data $\xi(\mu,0)=\frac{2}{M}\textbf{1}_{[0,M]}$, where $M$ is a positive constant. Later, in \cite{Laurencot:2005} and \cite{Laurencot:2006}, Lauren\c{c}ot discussed the self similar solution to \eqref{ConEq1}--\eqref{ConEqin1} under the coagulation kernels $\Lambda(\mu,\nu)\equiv1$ and $\Lambda(\mu,\nu)=\mu\nu$, respectively. After these results, Bagland and Lauren\c{c}ot \cite{Bagland:2007} confirmed the presence of self similar solutions to \eqref{ConEq1}--\eqref{ConEqin1} under the coagulation kernel $\Upsilon(\mu,\nu)=\mu^{\alpha}+\nu^{\alpha},~\alpha\in(0,1)$, where the self-similar profiles for these self similar solutions are compactly supported. More recently, Barik et. al.\ \cite{Barik:2022} have proved the existence of mass-conserving solutions to \eqref{ConEq1}--\eqref{ConEqin1} under the assumptions that the coagulation kernels have an algebraic singularity near zero and that they grow linearly at infinity.

The plan of the paper is the following:  Section 2 presents notations of space, definition and hypothesis which are needed in subsequent sections. At the end of this section, the main result of the paper is stated in Theorem \ref{InstaneousGelation}. However, the proof of Theorem \ref{InstaneousGelation} is presented in Section $3$. The proof of Theorem \ref{InstaneousGelation} depends essentially on Lemmas~\ref{ConLemma4} and~\ref{ConLemma6}.  These lemmas confirm the non-existence of a mass-conserving solutions to \eqref{ConEq1} at any time. The remaining part of Section $4$ is devoted to the mass-conserving solutions to \eqref{ConEq1}. In this section, the solution $\xi$ of \eqref{ConEq1} conserves mass, as shown in Theorem \ref{ConMassTheorem}, for a special form of unbounded coefficients, if it exists.

\section{Preliminaries and main result}
In order to study the possible occurrence of instantaneous gelation of solution to \eqref{ConEq1}--\eqref{ConEqin1}, let us first introduce Banach space $\mathcal{Y}$ with the norm $\|.\|_{\mathcal{Y}}$,
$$\mathcal{Y}=\bigg\{ \xi\in L^{1}(0,\infty):\|\xi\|_{\mathcal{Y}}<\infty\bigg\}\quad \mbox{where}\quad \|\xi\|_{\mathcal{Y}}=\int_{0}^{\infty}(1+\mu)|\xi(\mu,t)|\ d\mu.$$
We also set
$$\mathcal{Y}^{+} = \bigg\{\xi\in \mathcal{Y}\ : \ \xi(\mu)\geq0~\mbox{for}~\mbox{each}~\mu\geq0 \bigg\}.$$

\begin{definition}\label{Condefinition}
Let us assume that $\Lambda$ satisfies hypothesis \textbf{(A)} given below. Then, a function $\xi=\xi(\mu,t)$ is said to be a weak solution to the OHS equation \eqref{ConEq1}--\eqref{ConEqin1} with initial condition $\xi^{\mathrm{in}}$ if
$$0\leq\xi\in\mathcal{C}_w([0,T); L^{1}(\mathbb{R}_{>0}))\cap L^{\infty}(0,T; \mathcal{Y}^{+})$$
such that
\begin{align*}
(\mu,\nu,s)\longmapsto\mu\Lambda(\mu,\nu)\zeta(\mu,s)\zeta(\nu,s)\in L^{1}(\mathbb{R}_{>0}^{2}\times(0,t)),
\end{align*}
 and
\begin{align}\label{Conweaksol}
\int_{0}^{\infty}\omega(\mu)\big[\xi(\mu,t)-\xi(\mu,0)\big]\ d\mu=\int_{0}^{t}\int_{0}^{\infty}
\int_{0}^{\mu}\varpi_1(\mu, \nu) \xi(\mu,s)\xi(\nu,s)\Lambda(\mu,\nu)\ d\nu d\mu ds,
\end{align}
where
\begin{align}\label{Identity1}
 \varpi_1(\mu, \nu) = \nu \varpi'(\mu)-\varpi(\nu),
 \end{align}
for all $\varpi\in \mathcal{W}^{1,\infty}(\mathbb{R}_{>0})$ and first derivative of $\varpi$ is compactly supported. Here,
the spaces $\mathcal{W}^{1,\infty}$ and $\mathcal{C}([c, d]_W; L^{1}(\mathbb{R}_{>0}; d\mu))$ denote the Sobolev space and collection of
continuous functions (in time) with respect to weak topology of $L^{1}\mathbb({R}_{>0}; d\mu)$, respectively.
\end{definition}

Now, we state the following hypothesis on the coagulation kernels $\Lambda$, which is used in the subsequent analysis.
\begin{hypp}
There exists positive constants $\theta_{1},~\theta_{2}$ and $1<\beta<\gamma$ such that
\begin{description}
  \item[\textbf{(A)}]$\theta_{1}(\mu^{\beta}+\nu^{\beta})\leq\Lambda(\mu,\nu)\leq \theta_{2}(1+\mu)^{\gamma} (1+\nu)^{\gamma},\quad (\mu, \nu)\in\mathbb{R}_{>0}^{2}$.
\end{description}
\end{hypp}

Next, we define the moments of the concentration $\xi$ as follows
$$\mathcal{M}^{r}(t):=\int_{0}^{\infty}\mu^{r}\xi(\mu,t)\ d\mu\quad \mbox{and}\quad \mathcal{M}_{m}^{r}(t):=\int_{m}^{\infty}\mu^{r}\xi(\mu,t)\ d\mu,\quad r\in\mathbb{R}, $$
where $\mathcal{M}^{r}(t)$ is called $r^{th}$ moment of the concentration $\xi$.\\

We are now in position to state the main result of this paper.
\begin{thm}[Existence of instantaneous gelation]\label{InstaneousGelation} Assume that coagulation kernel $\Lambda$ satisfies $\textbf{(A)}$. Let $\xi$ be a weak solution to the OHS equations \eqref{ConEq1}--\eqref{ConEqin1} in the sense of Definition \ref{Condefinition}. Then, for all weak solutions of the OHS equations \eqref{ConEq1}--\eqref{ConEqin1}, the gelation occurs instantaneously, i.e., $T_{\mathrm{gel}}=0$.
\end{thm}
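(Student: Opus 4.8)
The plan is to derive a contradiction from assuming that mass conservation holds on some time interval $[0,T]$ with $T>0$, by testing the weak formulation \eqref{Conweaksol} against a well-chosen family of cutoff functions and exploiting the lower bound $\Lambda(\mu,\nu)\geq\theta_1(\mu^\beta+\nu^\beta)$ from hypothesis \textbf{(A)}. The natural test function is (a smoothed version of) $\varpi(\mu)=\min\{\mu,m\}$ for a large parameter $m$, whose derivative $\varpi'=\mathbf{1}_{[0,m]}$ is compactly supported, so that $\varpi_1(\mu,\nu)=\nu\varpi'(\mu)-\varpi(\nu)$. For $\mu\le m$ one has $\varpi'(\mu)=1$, and the integrand $\varpi_1(\mu,\nu)\xi(\mu)\xi(\nu)$ with $\nu\le\mu$ becomes $(\nu-\min\{\nu,m\})=0$ there; the surviving contribution comes from $\mu>m$, where $\varpi'(\mu)=0$ and $\varpi_1(\mu,\nu)=-\varpi(\nu)=-\min\{\nu,m\}\le 0$. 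Thus
\[
\int_0^\infty \varpi(\mu)\big[\xi(\mu,t)-\xi^{\mathrm{in}}(\mu)\big]\,d\mu
= -\int_0^t\!\!\int_m^\infty\!\!\int_0^\mu \min\{\nu,m\}\,\Lambda(\mu,\nu)\,\xi(\mu,s)\xi(\nu,s)\,d\nu\,d\mu\,ds .
\]
The left side is bounded in absolute value by $2\varrho_0$ uniformly in $m$ (since $\varpi(\mu)\le\mu$ and the first moment is conserved by assumption, hence finite for all $t$), so the right side must stay bounded as $m\to\infty$.

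Next I would bound the right-hand side from below to force it to blow up, contradicting the uniform bound. On the region $\{\mu>m,\ 0<\nu<\mu\}$ I would restrict further to, say, $\nu\in[1,2]$ (or any fixed compact interval away from zero) and $\mu>m$, so that $\min\{\nu,m\}\ge 1$ and $\Lambda(\mu,\nu)\ge\theta_1\mu^\beta$. This gives
\[
\int_0^\infty \varpi(\mu)\big[\xi^{\mathrm{in}}(\mu)-\xi(\mu,t)\big]\,d\mu
\ \ge\ \theta_1\int_0^t\Big(\int_1^2\xi(\nu,s)\,d\nu\Big)\Big(\int_m^\infty \mu^\beta\,\xi(\mu,s)\,d\mu\Big)\,ds .
\]
Since $\beta>1$, the inner tail integral $\int_m^\infty\mu^\beta\xi(\mu,s)\,d\mu$ controls $m^{\beta-1}\int_m^\infty\mu\,\xi(\mu,s)\,d\mu=m^{\beta-1}\mathcal{M}^1_m(s)$. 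Here I must rule out the degenerate possibility that the solution is trivial or supported near zero: using the integrability condition $\mu\Lambda(\mu,\nu)\xi(\mu,s)\xi(\nu,s)\in L^1$ together with the lower bound on $\Lambda$, a solution with $\xi^{\mathrm{in}}\not\equiv 0$ must have $\int_1^2\xi(\nu,s)\,d\nu$ and $\mathcal{M}^1_m(s)$ bounded below on a set of positive measure in $s$ for every $m$ — this is the step where I would need to argue that the mass cannot instantaneously concentrate at the origin, which is the technical heart. Granting this, the right-hand side grows at least like $c\,m^{\beta-1}\to\infty$, contradicting the bound $\le 2\varrho_0$; hence mass conservation fails for every $t>0$, i.e. $T_{\mathrm{gel}}=0$.

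The main obstacle I anticipate is precisely this lower bound on the surviving tail mass: one has to exclude that, under the (hypothetical) mass-conserving solution, all the mass escapes to arbitrarily small sizes so fast that $\int_1^2\xi\,d\nu$ or $\mathcal{M}^1_m(s)$ vanishes for a.e. $s$. This is presumably where Lemmas~\ref{ConLemma4} and~\ref{ConLemma6} enter — I would expect them to furnish, respectively, a uniform-in-time positive lower bound on a bulk moment (so that the integrand in the double integral genuinely produces an unbounded quantity) and the bookkeeping identity turning the weak formulation with $\varpi=\min\{\cdot,m\}$ into the tail-moment inequality above. The remaining pieces — justifying the use of a Lipschitz, compactly-supported-derivative test function by density, and handling the smoothing of $\min\{\mu,m\}$ — are routine and I would defer them.
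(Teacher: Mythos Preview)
Your identity with $\varpi(\mu)=\min\{\mu,m\}$ is correct, but the contradiction you hope for does not materialize: the right-hand side stays bounded as $m\to\infty$. In fact, the very inequality you derive is precisely what the paper uses (Lemma~\ref{ConLemma4}, with the essentially equivalent test function $\varpi=\mu\chi_{(0,\lambda)}$) to prove that the tail mass $\mathcal{M}^1_m(s)=\Theta_m(s)$ decays like $\exp\bigl(-c\,m^{\beta-1}(t_0-s)\bigr)$ for $s<t_0<T_{\mathrm{gel}}$. Consequently $m^{\beta-1}\mathcal{M}^1_m(s)\to 0$, not $\infty$, and your lower bound yields nothing. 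The ``main obstacle'' you flag is therefore not a technicality but the actual reason the argument fails: under the hypothesis $T_{\mathrm{gel}}>0$ there is no uniform-in-$m$ lower bound on the tail mass, and this is a \emph{consequence} of your own identity, so the reasoning is circular.

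Your guess about what Lemmas~\ref{ConLemma4} and~\ref{ConLemma6} supply is off. Lemma~\ref{ConLemma4} does not give a lower bound on a bulk moment; it turns your tail identity into the exponential tail decay above and then sums to show that \emph{all} moments $\mathcal{M}^r(t)$ are finite for $t<T_{\mathrm{gel}}$. The contradiction comes from a second, independent step (Lemmas~\ref{ConLemma5}--\ref{ConLemma6}): testing with $\varpi=\mu^r$ and using the lower bound on $\Lambda$ together with H\"older gives a differential inequality $\frac{d}{dt}\mathcal{M}^r\gtrsim(\mathcal{M}^r)^{1+\sigma}$ with $\sigma=(\beta-1)/(r-1)$, hence $t-\delta\lesssim\mathcal{M}^r(\delta)^{-\sigma}$. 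Since $\big(\mathcal{M}^r(\delta)\big)^{1/r}\to\infty$ by Lemma~\ref{lemmapositiv1} (which rests on the positivity result Theorem~\ref{theorem3}), sending $r\to\infty$ forces $t\le\delta$, the desired contradiction. The missing idea in your proposal is this second, higher-moment blow-up step; the tail identity alone cannot close the argument.
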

To perform the proof of Theorem \ref{InstaneousGelation} , we follow the arguments developed by Carr and da Costa in \cite{Carr:1992} and Banasiak et al.\ in \cite{Banasiak} . According to \cite{Carr:1992} and \cite{Banasiak}, the proof of Theorem~\ref{InstaneousGelation} involves two steps. These steps lead to a contradiction if the gelation time is positive. In Section~$2$, the first step is performed, i.e., if $T_{\mathrm{gel}}>0$, then all moments $\mathcal{M}^{r}(t)$ are finite for all $r\geq1$ and $t\in[0, T_{\mathrm{gel}})$. On the other hand, the second step shows that all moments $\mathcal{M}^{r}(t)$ are finite only on the time interval $(0, t_{r})$, with $t_{r}\rightarrow0$ as $r\rightarrow\infty$.


\section{Existence of instantaneous gelation}

Before proving the occurrence of instantaneous gelation for the OHS equations \eqref{ConEq1}--\eqref{ConEqin1}, we need to show some basic results.
\begin{thm}[Positivity of first moment]\label{theorem3}
 Consider $\xi^{\mathrm{in}}\in\mathcal{Y}^{+}$ such that $\xi^{\mathrm{in}}\neq0$ and $\Lambda>0$ a.e. in $(0, \infty)^{2}$ . Suppose that $\xi$ is a weak solution of \eqref{ConEq1}--\eqref{ConEqin1} on $[0, T)$ in the sense of Definition \ref{Condefinition}. Then, for all $R>0$ and $t>0$,
 $$\int_{R}^{\infty}\mu\xi(\mu,t)\ d\mu>0.$$
\end{thm}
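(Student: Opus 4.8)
The plan is to argue by contradiction. Suppose there exist $R>0$ and $t_0>0$ with $\int_R^\infty \mu\,\xi(\mu,t_0)\,d\mu=0$; since $\mu\xi\ge0$ this forces $\xi(\cdot,t_0)=0$ a.e.\ on $(R,\infty)$, and in particular the ``shifted tail'' $D_R(t):=\int_R^\infty(\mu-R)\xi(\mu,t)\,d\mu$ vanishes at $t=t_0$. The first ingredient I would establish is that $D_R$ is non-decreasing on $[0,T)$. To this end, note that the truncations $\varpi_k(\mu):=\min\{(\mu-R)_+,k\}$ lie in $\mathcal{W}^{1,\infty}(\mathbb{R}_{>0})$ with $\varpi_k'=\mathbf{1}_{(R,R+k)}$ compactly supported, hence are admissible in \eqref{Conweaksol}. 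A direct computation of $\varpi_{k,1}$ in \eqref{Identity1} shows $\varpi_{k,1}(\mu,\nu)=0$ for $\mu\le R$, $\varpi_{k,1}(\mu,\nu)=\nu\wedge R\ge0$ for $R<\mu<R+k$, and $\varpi_{k,1}(\mu,\nu)=-\min\{(\nu-R)_+,k\}\le0$ for $\mu\ge R+k$. Letting $k\to\infty$: by monotone convergence $\int_0^\infty\varpi_k\xi(\mu,t)\,d\mu\uparrow D_R(t)$ and the ``gain'' part of the right-hand side increases to $G_R(t):=\int_0^t\!\int_R^\infty\!\int_0^\mu(\nu\wedge R)\Lambda(\mu,\nu)\xi(\mu,s)\xi(\nu,s)\,d\nu\,d\mu\,ds\ge0$, whereas the ``loss'' part has integrand dominated by $\mu\,\Lambda(\mu,\nu)\xi(\mu,s)\xi(\nu,s)$ (which is in $L^1(\mathbb{R}_{>0}^2\times(0,t))$ by Definition~\ref{Condefinition}) and tending to $0$ pointwise, so it vanishes by dominated convergence. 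Since $\xi\in L^\infty(0,T;\mathcal{Y}^+)$ gives $\mathcal{M}^1(t)<\infty$, all terms are finite and one gets $D_R(t)=D_R(0)+G_R(t)$, whence $D_R$ is non-decreasing.

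Consequently $0\le D_R(t_0)\le\int_R^\infty\mu\xi(\mu,t_0)\,d\mu=0$ together with monotonicity gives $D_R\equiv0$ on $[0,t_0]$. Thus $D_R(0)=0$, i.e.\ $\xi^{\mathrm{in}}=0$ a.e.\ on $(R,\infty)$, and $G_R(t_0)=0$. Since $\nu\wedge R>0$ for $\nu>0$ and $\Lambda>0$ a.e., this forces $\xi(\mu,s)\xi(\nu,s)=0$ for a.e.\ $(\mu,\nu,s)$ with $0<\nu<\mu$, $\mu>R$, which — as $\xi\ge0$ and $\xi(\cdot,s)\in L^1$ — entails $\xi(\cdot,s)=0$ a.e.\ on $(R,\infty)$ for a.e.\ $s\in(0,t_0)$. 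Inserting this back into \eqref{Conweaksol} with test functions $\min\{(\mu-a)_+,k\}$ (for $0<a<R<k$) and letting $a\downarrow0$ shows the associated loss integrals vanish, so $\mathcal{M}^1(t)\equiv\varrho_0$ on $[0,t_0]$; in particular $\varrho_0>0$ (because $\xi^{\mathrm{in}}\ne0$) and $\xi$ is supported in $(0,R]$ throughout $[0,t_0)$ with strictly positive, conserved mass.

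The remaining step — the main obstacle — is to contradict the existence of such a solution, and this is where the superlinearity $\beta>1$ of the lower bound in \textbf{(A)} enters: the transport term in \eqref{ConEq1} has strictly positive velocity $\int_0^\mu\nu\Lambda(\mu,\nu)\xi(\nu,\cdot)\,d\nu$ wherever mass is present, so mass must be carried above the level $R$. Quantitatively, testing with the admissible truncation $\varpi(\mu)=\min\{\mu,R\}^{1+\beta}$ yields, on the support $(0,R]$, $\varpi_1(\mu,\nu)=(1+\beta)\nu\mu^\beta-\nu^{1+\beta}\ge\beta\nu\mu^\beta>0$ for $\nu<\mu$, hence $\frac{d}{dt}\mathcal{M}^{1+\beta}(t)\ge\beta\theta_1\int_0^R\mu^{2\beta}\xi(\mu,t)\big(\int_0^\mu\nu\xi(\nu,t)\,d\nu\big)d\mu$ by Hypothesis~\textbf{(A)}. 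A lower bound on $\int_0^\mu\nu\xi(\nu,t)\,d\nu$, uniform in $t\in[0,t_0)$, over a fixed sub-interval of $(0,R]$ carrying a definite fraction of the conserved mass $\varrho_0$ — obtained from the monotonicity of the truncated first moments $t\mapsto\int_0^\infty(\mu\wedge a)\xi(\mu,t)\,d\mu$ and from $\xi^{\mathrm{in}}\ne0$ — then converts this into a superlinear differential inequality for $\mathcal{M}^{1+\beta}$ that must blow up in finite time, contradicting $\mathcal{M}^{1+\beta}(t)\le R^\beta\varrho_0<\infty$. Securing that uniform lower bound (equivalently, ruling out that the whole mass migrates into an arbitrarily thin layer just below $R$) and calibrating the exponents so that the blow-up time can be forced below $t_0$ using $\beta>1$ is the delicate point; everything else is bookkeeping with admissible test functions and the integrability built into Definition~\ref{Condefinition}.
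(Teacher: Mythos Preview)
Your first six steps are sound and, in fact, cleaner than the corresponding portion of the paper: using the admissible test functions $\varpi_k(\mu)=\min\{(\mu-R)_+,k\}$ to obtain $D_R(t)=D_R(0)+G_R(t)$ and hence that $\xi(\cdot,s)=0$ a.e.\ on $(R,\infty)$ for all $s\in[0,t_0]$, together with $\xi^{\mathrm{in}}=0$ on $(R,\infty)$ and mass conservation, is a rigorous route to the intermediate conclusion that the solution is compactly supported in $(0,R]$ with conserved mass $\varrho_0>0$.

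The gap is in step~7. The blow-up inequality you sketch yields only $t_0\le C\,\mathcal{M}^{1+\beta}(0)^{-(\beta-1)/?}$ (or the analogous bound with $\mathcal{M}^2$), i.e.\ an \emph{upper bound on $t_0$ in terms of the initial data}. Since $t_0$ and $\xi^{\mathrm{in}}$ are both handed to you by the contradiction hypothesis, there is no mechanism to force the blow-up time below $t_0$: if $\xi^{\mathrm{in}}$ is concentrated near $0$, all the moments $\mathcal{M}^{1+\beta}(0)$, $\mathcal{M}^2(0)$ can be arbitrarily small and the blow-up time arbitrarily large. Your proposed fix --- a uniform lower bound on $\int_0^\mu\nu\xi(\nu,t)\,d\nu$ on a fixed sub-interval --- fails for the same reason: nothing prevents the support of $\xi^{\mathrm{in}}$ from being contained in $(0,\varepsilon)$ for tiny $\varepsilon$, and mass need not reach the upper part of $(0,R]$ before $t_0$. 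You also invoke Hypothesis~\textbf{(A)} (specifically $\beta>1$), whereas the theorem assumes only $\Lambda>0$ a.e.; the result should not depend on superlinear growth of the kernel.

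The paper closes the argument differently, with a minimality device that avoids any blow-up estimate. One sets $R_0:=\inf\{R\ge0:\int_R^\infty\mu\xi(\mu,t_0)\,d\mu=0\}$ and --- after reaching (essentially) your intermediate conclusion with $R=R_0$ --- shows via the boundary flux at $\mu=R_0$ (obtained by integrating \eqref{ConEq1} against $\mu$ over $(0,R_0)$) that also $\int_0^{R_0}\nu\xi(\nu,s)\,d\nu=0$ for $s\in(0,t_0)$. Combined with $\int_{R_0}^\infty\nu\xi(\nu,s)\,d\nu=0$ this forces $\int_{R_0/2}^\infty\nu\xi(\nu,s)\,d\nu=0$, contradicting the minimality of $R_0$ unless $R_0=0$, which in turn gives $\xi^{\mathrm{in}}\equiv0$. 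The missing idea in your argument is precisely this passage to the infimum $R_0$ and the use of the transport structure at the edge of the support to shrink $R_0$; no growth condition on $\Lambda$ is needed, only its a.e.\ positivity.
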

\begin{proof}
Suppose, for the sake of contradiction, that there exists $t_{0}>0$ such that
\begin{align}\label{theorem3eq1}
R_{0}:=\inf\bigg\{R\geq0\ :\ \int_{R}^{\infty}\mu\xi(\mu, t_{0})\ d\mu=0\bigg\}<\infty\quad \mbox{and}\quad \xi(R_{0},s)\neq0,\quad \forall\quad s\in(0,t_{0}).
\end{align}
Now, for all $\mu\in\mathbb{R}_{>0}$ and $R>R_{0}$, we set $\varpi(\mu)=\mu\chi_{(R_{0},R)}(\mu)$ into \eqref{Identity1}. Then, we obtain
\begin{equation*}
 \varpi_{1}(\mu, \nu)=\begin{cases}
0,\ & \text{if}\ (\mu, \nu ) \in (0, R_{0}] \times (0, \mu),\ \\
 \nu, \  &  \text{if}\ (\mu,\nu) \in (R_{0}, R) \times (0, R_{0}],\ \\
 0, \  &  \text{if}\ (\mu,\nu) \in (R_{0}, R) \times (R_{0}, \mu),\ \\
-\nu,\ &  \text{if}\ (\mu, \nu)\in [R, \infty) \times (R_{0}, R),\ \\
0,\ &  \text{if}\ (\mu, \nu)\in [R, \infty) \times [R, \mu).
\end{cases}
\end{equation*}
Putting the values of $\varpi$ and $\varpi_{1}$ into \eqref{Conweaksol}, we have
\begin{eqnarray}\label{InstConEq311}
\lefteqn{\int_{R_{0}}^{R}\mu\xi(\mu,t_{0})\ d\mu }\nonumber\\
&=&\int_{R_{0}}^{R}\mu\xi^{\mathrm{in}}(\mu)\ d\mu+\int_{0}^{t_{0}}\int_{R_{0}}^{R}\int_{0}^{\mu}\nu\Lambda(\mu,\nu)\xi(\mu,s)\xi(\nu,s)\ d\nu d\mu ds\nonumber\\
&&-\int_{0}^{t_{0}}\int_{R_{0}}^{R}\int_{R}^{\infty}\nu\Lambda(\mu,\nu)\xi(\mu,s)\xi(\nu,s)\ d\mu d\nu ds\nonumber\\
&=&\int_{R_{0}}^{R}\mu\xi^{\mathrm{in}}(\mu)\ d\mu+\int_{0}^{t_{0}}\int_{R_{0}}^{R}\mathcal{I}_{1}(\mu, s)\ d\mu ds
+\int_{0}^{t_{0}}\int_{R_{0}}^{R}\mathcal{I}_{2}(\nu, s)\ d\nu ds,
\end{eqnarray}
where
$$\mathcal{I}_{1}(\mu, s)=:\int_{0}^{\mu}\nu\Lambda(\mu,\nu)\xi(\mu,s)\xi(\nu,s)\ d\nu\quad \mbox{and}\quad \mathcal{I}_{2}(\nu, s)=:\int_{R}^{\infty}\nu\Lambda(\mu,\nu)\xi(\mu,s)\xi(\nu,s)\ d\mu.$$
Since $R>R_{0}$ is arbitrary, we deduce from \eqref{theorem3eq1}, \eqref{InstConEq311}, Definition \ref{Condefinition}, the dominated convergence theorem, and the nonnegativity of $\xi^{\mathrm{in}}$ and $(\mathcal{I}_{\wp})_{\wp=1,2}$ that
\begin{equation}\label{InstConEq312}
\begin{array}{rcll}
\xi^{\mathrm{in}}&=&0& a.e.~\mbox{in}\quad (R_{0},\infty),\\
\mathcal{I}_{1}&=&0 & a.e.~\mbox{in}\quad (0, t_{0})\times(R_{0}, \infty), \\
\mathcal{I}_{2}&=&0 & a.e.~\mbox{in}~(0, t_{0})\times(R_{0}, \infty).
\end{array}
\end{equation}
Now, it follows from \eqref{InstConEq312} that
\[
\mu\Lambda(\mu,\nu)\xi(\mu,s)\xi(\nu,s)\chi_{(0,\mu)}(\nu)=0\quad a.e.~\mbox{in}\quad (0, t_{0})\times(R_{0}, \infty)\times(0, \infty).
\]
Consequently, an application of Fubini's theorem yields
\begin{eqnarray}\label{InstConEq31451}
0&=:&\int_{R_{0}}^{\infty}\int_{0}^{\mu}\mu^{2}\xi(\mu,s)\xi(\nu,s)\ d\nu d\mu \nonumber\\
&=&\int_{R_{0}}^{\infty}\int_{0}^{R_{0}}\mu^{2}\xi(\mu,s)\xi(\nu,s)\ d\nu d\mu
+\frac{1}{2}\int_{R_{0}}^{\infty}\int_{R_{0}}^{\mu}\mu^{2}\xi(\mu,s)\xi(\nu,s)\ d\nu d\mu \nonumber\\
&&+\frac{1}{2}\int_{R_{0}}^{\infty}\int_{\mu}^{\infty}\nu^{2}\xi(\mu,s)\xi(\nu,s)\ d\nu d\mu.
\end{eqnarray}
From \eqref{InstConEq31451}, we infer that
\[
\int_{0}^{t_{0}}\bigg(\int_{R_{0}}^{\infty}\nu\xi(\nu,s)d\nu\bigg)^{2}\ ds=0
\]
and thus,
\begin{align}\label{InstConEq315}
\int_{R_{0}}^{\infty}\nu\xi(\nu,s)\ d\nu=0,\quad s\in(0,t_{0}).
\end{align}

Again, we set $\varpi(\mu)=\mu\chi_{(0, R_{0})}(\mu)$  into \eqref{Identity1} for all $\mu\in(0,\infty)$. Then, we obtain
\begin{equation*}
 \varpi_{1}(\mu, \nu)=\begin{cases}
0,\ & \text{if}\ (\mu, \nu ) \in (0, R_{0}) \times (0, \mu),\ \\
-\nu, \  &  \text{if}\ (\mu,\nu) \in (R_{0}, \infty) \times (0, R_{0}),\ \\
0,\ &  \text{if}\ (\mu, \nu)\in (R_{0}, \infty) \times (R_{0}, \mu).
\end{cases}
\end{equation*}
Substituting the values of $\varpi$ and $\varpi_{1}$ into \eqref{Conweaksol} gives
\begin{align}\label{InstConEq3171}
\int_{0}^{R_{0}}\mu\xi(\mu,t_{0})\ d\mu
=&\int_{0}^{R_{0}}\mu\xi^{\mathrm{in}}(\mu)\ d\mu-\int_{0}^{t_{0}}\int_{R_{0}}^{\infty}\int_{0}^{\mu}\nu\Lambda(\mu,\nu)\xi(\mu,s)\xi(\nu,s)\ d\nu d\mu ds.
\end{align}
We infer from \eqref{InstConEq3171} and \eqref{InstConEq31451} that
\begin{align}\label{InstConEq31712}
\int_{0}^{R_{0}}\mu\xi(\mu,t_{0})\ d\mu=\int_{0}^{R_{0}}\mu\xi^{\mathrm{in}}(\mu)\ d\mu.
\end{align}
Now, multiplying the equation \eqref{ConEq1} by $\mu$ and taking integration with respect to $\mu$ from $0$ to $R_{0}$ gives

\begin{align*}
\frac{d}{ds}\int_{0}^{R_{0}}\mu\xi(\mu,s)\ d\mu=-R_{0}\int_{0}^{R_{0}}\nu\xi(R_{0},s)\xi(\nu,s)\Lambda(R_{0}, \nu)\ d\nu.
\end{align*}
Taking integration with respect to time from $0$ to $t_{0}$ yields 
\begin{align*}
\int_{0}^{R_{0}}\mu\xi(\mu,t_{0})\ d\mu=\int_{0}^{R_{0}}\mu\xi^{\mathrm{in}}(\mu)\ d\mu-R_{0}\int_{0}^{t_{0}}\int_{0}^{R_{0}}\nu\xi(R_{0},s)\xi(\nu,s)\Lambda(R_{0},\nu)\ d\nu ds.
\end{align*}
Now, we conclude from definition \eqref{theorem3eq1} of $R_{0}$ and \eqref{InstConEq31712} that
\begin{align*}
 \int_{0}^{t_{0}}\int_{0}^{R_{0}}\nu\xi(\nu,s)\Lambda(R_{0},\nu)\ d\nu ds=0,
 \end{align*}
 from which we readily deduce that
 $$\nu\xi(\nu,s)=0\quad a.e.~\mbox{in}\quad (0, t_{0})\times(0, R_{0})$$
and thus,
 \begin{align*}
 \int_{0}^{t_{0}}\int_{R_{0}/2}^{R_{0}}\nu\xi(\nu,s)\ d\nu ds\leq \int_{0}^{t_{0}}\int_{0}^{R_{0}}\nu\xi(\nu,s)\ d\nu ds=0.
 \end{align*}
 Consequently, it is
 \begin{align}\label{InstConEq3145111}
 \int_{R_{0}/2}^{R_{0}}\nu\xi(\nu,s)\ d\nu=0,\quad s\in(0, t_{0}).
 \end{align}
Now, we obtain from \eqref{InstConEq3145111} and \eqref{InstConEq315} that
\begin{align}\label{InstConEq3145222}
 \int_{R_{0}/2}^{\infty}\nu\xi(\nu,s)\ d\nu=0,\quad s\in(0, t_{0}).
 \end{align}
We conclude, from \eqref{InstConEq3145222} and definition \eqref{theorem3eq1} of $R_{0}$, that $R_{0} = 0$. Then $\xi^{\mathrm{in}}\equiv 0$ according to \eqref{InstConEq312}. This clearly contradicts our assumption.
\end{proof}

\begin{lemma}[Limit behavior for higher moments]\label{lemmapositiv1}
Let $\xi$ be a weak solution to \eqref{ConEq1}--\eqref{ConEqin1} with $\int_{R}^{\infty}\mu\xi(\mu,t)d\mu>0$. Then, we have
$$\lim_{p\rightarrow\infty}\bigg(\int_{0}^{\infty}\mu^{p}\xi(\mu,t)d\mu\bigg)^{1/p}=\infty.$$
\end{lemma}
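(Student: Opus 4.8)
The plan is to reduce the assertion to an elementary fact about nonnegative integrable functions: if the mass carried on $[R,\infty)$ is strictly positive for every $R>0$, then the rescaled moments $\big(\mathcal{M}^{p}(t)\big)^{1/p}$ cannot stay bounded. First I would fix $t>0$; by Theorem~\ref{theorem3} the hypothesis $\int_{R}^{\infty}\mu\,\xi(\mu,t)\,d\mu>0$ is in fact available for \emph{every} $R>0$, and this quantifier is exactly what makes the argument work. It then suffices to show that $\displaystyle\liminf_{p\to\infty}\big(\mathcal{M}^{p}(t)\big)^{1/p}\ge R$ for each fixed $R>0$, since letting $R\to\infty$ forces the lower limit — and hence the limit itself — to equal $+\infty$.

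To obtain this lower bound I would estimate, for $p\ge 1$,
\[
\mathcal{M}^{p}(t)=\int_{0}^{\infty}\mu^{p}\xi(\mu,t)\,d\mu\ \ge\ \int_{R}^{\infty}\mu^{p-1}\big(\mu\,\xi(\mu,t)\big)\,d\mu\ \ge\ R^{\,p-1}\int_{R}^{\infty}\mu\,\xi(\mu,t)\,d\mu\ =\ R^{\,p-1}\,\mathcal{M}_{R}^{1}(t),
\]
using $\mu^{p-1}\ge R^{p-1}$ on $[R,\infty)$. If $\mathcal{M}^{p}(t)=\infty$ for some $p$ there is nothing to prove — indeed the moment is then infinite for every larger exponent, since $\mu^{p}\ge\mu^{p_{0}}$ on $[1,\infty)$ while $\int_{0}^{1}\mu^{p}\xi(\mu,t)\,d\mu\le\|\xi(\cdot,t)\|_{L^{1}}<\infty$ — so I may assume all moments finite. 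Taking $p$-th roots and passing to the limit, $\big(\mathcal{M}^{p}(t)\big)^{1/p}\ge R^{(p-1)/p}\big(\mathcal{M}_{R}^{1}(t)\big)^{1/p}\longrightarrow R$, because $0<\mathcal{M}_{R}^{1}(t)<\infty$ forces $\big(\mathcal{M}_{R}^{1}(t)\big)^{1/p}\to 1$. Hence $\liminf_{p\to\infty}\big(\mathcal{M}^{p}(t)\big)^{1/p}\ge R$, and since $R>0$ was arbitrary the claim follows.

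There is no genuinely hard step here; the only point that needs attention is the quantifier over $R$, because the estimate with a single radius only yields the finite lower bound $R$, and it is the positivity of $\int_{R}^{\infty}\mu\,\xi(\mu,t)\,d\mu$ for all $R>0$ (Theorem~\ref{theorem3}) that permits $R\to\infty$. Within the paper this lemma serves to upgrade "all moments finite on $[0,T_{\mathrm{gel}})$" into the blow-up information needed in the second step of the proof of Theorem~\ref{InstaneousGelation}, so phrasing it as the divergence of $\big(\mathcal{M}^{p}(t)\big)^{1/p}$ is precisely the form required.
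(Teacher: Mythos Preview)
Your argument is correct and follows essentially the same route as the paper: restrict the integral to $[R,\infty)$ (the paper uses $[l,\infty)$ with $l\ge 1$), pull out $R^{p-1}$, take $p$-th roots, and let $p\to\infty$ to get the lower bound $R$, then let $R\to\infty$. Your write-up is slightly more careful in that you separate out the case of an infinite moment and phrase the conclusion via $\liminf$, but the core estimate and the logic are identical to the paper's proof.
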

\begin{proof}
It is for all $l\geq1$ and $t\in(0, T)$
\begin{eqnarray*}
\bigg(\int_{0}^{\infty}\mu^{p}\xi(\mu,t)d\mu\bigg)^{1/p}&\geq& \bigg(\int_{l}^{\infty}\mu^{p}\xi(\mu,t)d\mu\bigg)^{1/p}\\
&\geq& l^{(p-1)/p}\bigg(\int_{l}^{\infty}\mu\xi(\mu,t)d\mu\bigg)^{1/p},
\end{eqnarray*}
so that
$$\lim_{p\rightarrow\infty}\bigg(\int_{0}^{\infty}\mu^{p}\xi(\mu,t)d\mu\bigg)^{1/p}\geq l.$$
The above inequality is valid for all $l\geq1$. Therefore, we conclude that
$$\lim_{p\rightarrow\infty}\left(\mathcal{M}^{p}(t)\right)^{1/p}=\infty.$$
\end{proof}


\begin{lemma}[Integrability of all higher moments]\label{ConLemma4}
Assume that $\Lambda$ satisfies $\textbf{(A)}$ and that for the  initial condition $0\leq\zeta^{\mathrm{in}}\in \mathcal{Y}^{+}$ holds. Assume further that $\xi$ be a solution to \eqref{ConEq1}--\eqref{ConEqin1} on $[0,T)$ s.t. $T_{\mathrm{gel}}\in(0, T]$. Then, for all integers $r\geq1$ and some $t_{0}\in(0, T_{\mathrm{gel}})$, we have
 $$\sup_{t\in [0, t_{0})}(\mathcal{M}^{r}(t))< \infty.$$
  \end{lemma}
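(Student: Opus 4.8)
The plan is to turn the weak formulation \eqref{Conweaksol} into a closed differential inequality for the moments and then to remove the ``moment cascade'' it produces by iteration, following the scheme of Carr--da Costa \cite{Carr:1992} and Banasiak et al.\ \cite{Banasiak}. Since $\xi\in L^{\infty}(0,T;\mathcal{Y}^{+})$ one has $\mathcal{M}^{0}(s),\mathcal{M}^{1}(s)\le\varrho_{1}:=\|\xi\|_{L^{\infty}(0,T;\mathcal{Y})}$ for a.e.\ $s\in(0,T)$ (in fact the mass is conserved, $\mathcal{M}^{1}(s)=\varrho_{0}$, for $s<T_{\mathrm{gel}}$), and the hypothesis $T_{\mathrm{gel}}>0$ makes the conclusion non-vacuous, the time $t_{0}$ being produced inside $(0,T_{\mathrm{gel}})$. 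Fix an integer $r\ge 1$; for $R>0$ I would test \eqref{Conweaksol} against $\varpi=\varpi_{r,R}\in\mathcal{W}^{1,\infty}(\mathbb{R}_{>0})$ that equals $\mu\mapsto\mu^{r}$ on $[0,R]$, is constant on $[2R,\infty)$, and satisfies $0\le\varpi_{r,R}'(\mu)\le r\mu^{r-1}$. On $\{\nu\le\mu\le R\}$ one has $\varpi_{1}(\mu,\nu)=r\nu\mu^{r-1}-\nu^{r}\in[0,\,r\nu\mu^{r-1}]$ (because $\nu^{r}\le\nu\mu^{r-1}$), while on $\{\nu\le\mu,\ \mu>R\}$ one has $\varpi_{r,R}'(\mu)=0$, hence $\varpi_{1}(\mu,\nu)\le 0$. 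Dropping this nonpositive part and letting $R\to\infty$ by monotone convergence gives
\begin{equation}\label{eq:planA}
\mathcal{M}^{r}(t)\ \le\ \mathcal{M}^{r}(0)+r\int_{0}^{t}\!\!\int_{0}^{\infty}\mu^{r-1}\xi(\mu,s)\Big(\int_{0}^{\mu}\nu\,\Lambda(\mu,\nu)\,\xi(\nu,s)\,d\nu\Big)d\mu\,ds .
\end{equation}
Using the upper bound in \textbf{(A)} with $\nu\le\mu$ (so $\Lambda(\mu,\nu)\le\theta_{2}(1+\mu)^{2\gamma}$), then $\int_{0}^{\mu}\nu\xi(\nu,s)\,d\nu\le\varrho_{1}$ and $\mu^{r-1}(1+\mu)^{2\gamma}\le 2^{2\gamma}(\mu^{r-1}+\mu^{r-1+2\gamma})$, inequality \eqref{eq:planA} becomes, with $\sigma:=2\gamma-1>1$ and a constant $C_{*}=C_{*}(\theta_{2},\gamma,\varrho_{1})$,
\begin{equation}\label{eq:planB}
\mathcal{M}^{\rho}(t)\ \le\ \mathcal{M}^{\rho}(0)+C_{*}\,\rho\int_{0}^{t}\big(\varrho_{1}+\mathcal{M}^{\rho+\sigma}(s)\big)\,ds ,\qquad \rho\ge 1,
\end{equation}
valid for every real $\rho\ge1$ since a truncated $\mu^{\rho}$ is admissible as well. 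The right-hand side of \eqref{eq:planB} involves the strictly larger moment $\mathcal{M}^{\rho+\sigma}$, which is exactly why a plain induction on $\rho$ cannot close the estimate.

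To close it I would iterate \eqref{eq:planB} along the progression $\rho=r,\,r+\sigma,\,r+2\sigma,\dots$: after $n$ substitutions the nested time integrals run over $n$-dimensional simplices of volume $t^{n}/n!$, which yields
\[
\mathcal{M}^{r}(t)\ \le\ \sum_{j=0}^{n-1}\pi_{j}(r)\,\frac{t^{j}}{j!}\,\mathcal{M}^{r+j\sigma}(0)\ +\ \varrho_{1}\sum_{j=1}^{n}\pi_{j}(r)\,\frac{t^{j}}{j!}\ +\ \pi_{n}(r)\!\!\int\limits_{0<s_{n}<\dots<s_{1}<t}\!\!\!\mathcal{M}^{r+n\sigma}(s_{n})\,ds ,
\]
where $\pi_{j}(r):=\prod_{i=0}^{j-1}C_{*}(r+i\sigma)=C_{*}^{\,j}\,\sigma^{\,j}\,\Gamma\!\big(j+\tfrac{r}{\sigma}\big)/\Gamma\!\big(\tfrac{r}{\sigma}\big)$, so that $\pi_{j}(r)/j!\le C_{r}\,(C_{*}\sigma)^{j}j^{\,r/\sigma}$ grows only like $(C_{*}\sigma)^{j}$ up to a sub-exponential factor. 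If $\xi^{\mathrm{in}}$ is supported in $[0,L]$ then $\mathcal{M}^{r+j\sigma}(0)\le L^{r+j\sigma}\mathcal{M}^{0}(0)$ and likewise $\sup_{[0,t]}\mathcal{M}^{r+n\sigma}\le L^{r+n\sigma}\varrho_{1}$, so the $j$-th term is $\lesssim_{r}(C_{*}\sigma L^{\sigma}t)^{j}$ up to a sub-exponential factor that does not affect the radius of convergence; hence for $t<t_{0}:=\big(C_{*}\sigma\max\{L^{\sigma},1\}\big)^{-1}$ — a threshold independent of $r$ — the series converges, the remainder tends to $0$ as $n\to\infty$, and $\mathcal{M}^{r}(t)$ is dominated by a quantity that increases in $t$ and is finite on $[0,t_{0})$. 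This gives $\sup_{t\in[0,t_{0}')}\mathcal{M}^{r}(t)<\infty$ for every $t_{0}'<t_{0}$ and every integer $r\ge1$, as claimed. Equivalently one may package the iteration into the generating function $\mathcal{G}_{\lambda}(t)=\sum_{k\ge0}\tfrac{\lambda^{k}}{k!}\mathcal{M}^{1+k\sigma}(t)=\int_{0}^{\infty}\mu\,e^{\lambda\mu^{\sigma}}\xi(\mu,t)\,d\mu$, which \eqref{eq:planB} turns into a closed degenerate-parabolic differential inequality $\partial_{t}\mathcal{G}_{\lambda}\le C_{*}\,\partial_{\lambda}\!\big(\lambda\,\partial_{\lambda}\mathcal{G}_{\lambda}\big)+C_{*}(1+\lambda)e^{\lambda}\varrho_{1}$, finite on a short interval once $\mathcal{G}_{\lambda}(0)<\infty$ for small $\lambda$.

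Two points require care, and the second is the main obstacle. First, since a priori $\mathcal{M}^{\rho}(t)\in[0,\infty]$, every interchange of limit, sum and integral above — in particular the passage $R\to\infty$ in \eqref{eq:planA} and the termwise iteration of \eqref{eq:planB} — must first be carried out for the finite truncated quantities $\int_{0}^{R}\mu^{\rho}\xi(\mu,t)\,d\mu$ and only then passed to the limit; this, together with the balancing of the rising-factorial constants $\pi_{j}(r)$ against the simplex volumes $t^{j}/j!$, is the mechanism that makes the argument work but is essentially bookkeeping. Second, and genuinely delicate, is the passage from a compactly supported $\xi^{\mathrm{in}}$ to a general $\xi^{\mathrm{in}}\in\mathcal{Y}^{+}$: for a heavy-tailed datum the initial moments $\mathcal{M}^{r+j\sigma}(0)$ grow super-geometrically in $j$ (consistently with Lemma~\ref{lemmapositiv1}) and the series above diverges, so one must either run the estimate on the truncations $\xi^{\mathrm{in}}\mathbf{1}_{[0,L]}$ and pass to the limit, or exploit the extra integrability built into Definition~\ref{Condefinition} — the lower bound $\Lambda\ge\theta_{1}(\mu^{\beta}+\nu^{\beta})$ together with $\mu\Lambda\xi\xi\in L^{1}(\mathbb{R}_{>0}^{2}\times(0,t))$ forces $\mathcal{M}^{1+\beta}\in L^{1}(0,t)$, so that after a small time shift the solution plays the role of a better initial datum. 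The reduction of \eqref{Conweaksol} to \eqref{eq:planA}--\eqref{eq:planB} is, by contrast, routine once the sign of $\varpi_{1}$ on the two regions $\{\mu\le R\}$ and $\{\mu>R\}$ is observed.
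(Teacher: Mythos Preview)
Your route differs from the paper's in a way that does not close. You feed the \emph{upper} bound $\Lambda\le\theta_{2}(1+\mu)^{\gamma}(1+\nu)^{\gamma}$ into the weak formulation and obtain the upward cascade \eqref{eq:planB}, which you then try to sum by iteration. The difficulty you flag at the end is not a side issue but the heart of the matter: for general $\xi^{\mathrm{in}}\in\mathcal{Y}^{+}$ the initial moments $\mathcal{M}^{r+j\sigma}(0)$ may be infinite for every $j\ge1$, so the series is meaningless, and neither of your proposed fixes rescues it. Truncating $\xi^{\mathrm{in}}$ to $[0,L]$ requires continuous dependence of weak solutions on initial data, which is unavailable here (and cannot be, since the paper later proves nonexistence for these kernels). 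Shifting time using $\mathcal{M}^{1+\beta}\in L^{1}(0,t)$ only buys you one extra finite moment at the new starting time; the cascade still needs \emph{all} of $\mathcal{M}^{1+\beta+k\sigma}(\delta)$, $k\ge0$, which you do not have. Even the compactly supported case is not settled by your sketch: the claim ``$\sup_{[0,t]}\mathcal{M}^{r+n\sigma}\le L^{r+n\sigma}\varrho_{1}$'' presumes the solution stays supported in $[0,L]$, but the OHS transport term moves mass to the right, and with kernels growing like $\mu^{\beta}$, $\beta>1$, one should expect the support to spread instantly (this is morally the same mechanism as the instantaneous gelation you are ultimately proving). If instead you run the iteration with the truncated moments $\int_{0}^{R}\mu^{\rho}\xi\,d\mu$, the trivial bound $\int_{0}^{R}\mu^{r+n\sigma}\xi\le R^{r+n\sigma-1}\varrho_{1}$ makes the remainder of size $(C_{*}\sigma R^{\sigma}t)^{n}$, whose vanishing forces $t<(C_{*}\sigma R^{\sigma})^{-1}\to0$ as $R\to\infty$.

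The paper takes the opposite tack and uses the \emph{lower} bound in \textbf{(A)}, which produces a downward rather than upward mechanism. Testing with $\varpi(\mu)=\mu\chi_{(0,\lambda)}(\mu)$ and writing $\Gamma_{\lambda}=\int_{0}^{\lambda}\mu\xi$, $\Theta_{\lambda}=\int_{\lambda}^{\infty}\mu\xi$, one gets
\[
\Gamma_{\lambda}(t)-\Gamma_{\lambda}(\tau)\le-\theta_{1}\lambda^{\beta-1}\int_{\tau}^{t}\Gamma_{\lambda}(s)\Theta_{\lambda}(s)\,ds,
\]
and since mass is conserved on $[0,T_{\mathrm{gel}})$ and $\Gamma_{\lambda}\uparrow\varrho_{0}$ uniformly on $[0,t_{0}]$ (Dini), for large $\lambda$ this becomes a closed Gronwall inequality for $\Theta_{\lambda}$ yielding $\Theta_{\lambda}(t)\le\varrho_{0}\exp\!\big(-\tfrac{1}{2}\theta_{1}\varrho_{0}\lambda^{\beta-1}(t_{0}-t)\big)$. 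This stretched-exponential tail in $\lambda$ kills every polynomial weight, and a short summation over dyadic (or unit) shells gives $\mathcal{M}^{r}(t)<\infty$ for all $r$, uniformly on $[0,t_{0})$, with no hypothesis on $\xi^{\mathrm{in}}$ beyond $\mathcal{Y}^{+}$. The key idea your argument is missing is precisely this: the lower bound on $\Lambda$ makes the \emph{tail mass} $\Theta_{\lambda}$ decay fast enough in $\lambda$ to control all moments, whereas the upper bound only tells you how fast moments can grow in time and therefore cannot close without strong a priori tail information.
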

  
\begin{proof}
For all $\mu\in(0,\infty)$ and $\lambda\geq1$, set $\varpi(\mu)=\mu\chi_{(0,\lambda)}(\mu)$ into \eqref{Identity1}. Then, we obtain
\begin{equation*}
 \varpi_{1}(\mu, \nu)=\begin{cases}
0,\ & \text{if}\ (\mu, \nu ) \in (0, \lambda) \times (0, \mu),\ \\
- \nu, \  &  \text{if}\ (\mu,\nu) \in [\lambda, \infty) \times (0, \lambda),\ \\
0,\ &  \text{if}\ (\mu, \nu)\in [\lambda, \infty) \times [\lambda, \mu).
\end{cases}
\end{equation*}
Inserting the value of $\varpi$ and $\varpi_{1}$ into \eqref{Conweaksol}, we have for $0 \leq\tau< t< \varepsilon<t_{0}$ that 
\[
\int_{0}^{\lambda}\mu[\xi(\mu,t)-\xi(\mu,\tau)]\ d\mu  =  -\int_{\tau}^{t}\int_{\lambda}^{\infty}\int_{0}^{\lambda}
\nu\Lambda(\mu,\nu)\xi(\mu,s)\xi(\nu,s)\ d\nu d\mu ds.
\]

Using the assumptions on $\Lambda$, we arrive at the inequality
\begin{eqnarray}\label{InstConEq3}
\Gamma_{\lambda}(t)-\Gamma_{\lambda}(\tau)&\leq&-\theta_{1}\int_{\tau}^{t}\left(\int_{\lambda}^{\infty}\int_{0}^{\lambda}
\nu\mu^{\beta}\xi(\mu,s)\xi(\nu,s)\ d\nu d\mu\right) ds\nonumber\\
&\leq&-\theta_{1}\lambda^{\beta-1}\int_{\tau}^{t}\Gamma_{\lambda}(s)\Theta_{\lambda}(s)\ ds,
\end{eqnarray}
 where
 \begin{align*}
\Gamma_{\lambda}(t):=\int_{0}^{\lambda}\mu\xi(\mu,t)\ d\mu \quad \mbox{and}\quad \Theta_{\lambda}(t):=\int_{\lambda}^{\infty}\mu\xi(\mu,t)\ d\mu,
\end{align*}
for $\lambda\geq1$ and $0\leq\tau<t<t_{0}<T_{\mathrm{gel}}$. From $\Gamma_{\lambda}$ and $\Theta_{\lambda}$, we have
$$\Gamma_{\lambda}(t)+\Theta_{\lambda}(t)=\varrho_{0},\quad \forall~t\in[0, T_{\mathrm{gel}}),$$
with $\varrho_0$ defined in \eqref{MassConseDisCon}.
Now, we infer from Dini's monotone convergence theorem \cite{komornik2002functional} that $\mathcal{M}^{1}(t)$ is uniformly convergent in $[0, t_{0}]$. Therefore, there exist a positive integer $p_{0}$ s.t.
$$\frac{\varrho_{0}}{2}\leq \Gamma_{\lambda}(t),\quad \mbox{for}~\mbox{all}\quad \lambda\geq p_{0}\quad \mbox{and}\quad t\in[0,t_{0}].$$

Inserting the above lower bound of $\Gamma_{\lambda}$ into \eqref{InstConEq3} leads to 
\begin{align}\label{FiniteConEq2}
\mathcal{U}_{\lambda}(\tau)=:\Theta_{\lambda}(t)+\frac{1}{2}\varrho_{0}\lambda^{\beta-1}\int_{\tau}^{t}\Theta_{\lambda}(s)\ ds\leq \Theta_{\lambda}(\tau),\quad 0\leq t<\tau\leq t_{0}<T_{\mathrm{gel}}.
\end{align}
Setting $C_{1}=\frac{1}{2}\varrho_{0}\lambda^{\beta-1}$, we infer from \eqref{FiniteConEq2} that
\begin{align*}
\frac{d}{d\tau}\mathcal{U}_{\lambda}(\tau)= C_{1}\Theta_{\lambda}(\tau)\geq C_{1}\mathcal{U}_{\lambda}(\tau).
\end{align*}
Dividing by $\mathcal{U}_{\lambda}(\tau)$, integrating with respect to $\tau$ from $t$ to $t_{0}$ and then applying \eqref{FiniteConEq2}, we obtain
$$\exp(-C_{1}t)\Theta_{\lambda}(t)\leq\exp(-C_{1}t)\mathcal{U}_{\lambda}(t)
\leq\exp(-C_{1}t_{0})\mathcal{U}_{\lambda}(t_{0})\leq\exp(-C_{1}t_{0})\Theta_{\lambda}(t_{0}).$$
Since $\Theta_{\lambda}(t_{0})\leq \varrho_{0}$, we end up with
\begin{align}\label{InstContEq4}
\Theta_{\lambda}(t)\leq\varrho_{0} \exp(-C_{1}(t_{0}-t)),
\end{align}
for all $0\leq t< \tau<t_{0}$, and $\lambda\geq p_{0}$. Next, for fixed $r\geq2$ and $t\in[0, t_{0})$, it is 
 \begin{eqnarray}\label{InstConEq5}
\lefteqn{ \int_{\lambda_{1}}^{\lambda}\mu^{r}\xi(\mu,t)\ d\mu} \nonumber\\ &\leq&\sum_{k=\lambda_{1}}^{\lambda-1}(k+1)^{r-1}\int_{k}^{k+1}\mu\xi(\mu,t)\ d\mu\nonumber\\
 &=&\sum_{k=\lambda_{1}}^{\lambda-1}(k+1)^{r-1}[\Theta_{k}(t)-\Theta_{k+1}(t)]\nonumber\\
 &=&\sum_{k=\lambda_{1}}^{\lambda-1}(k+1)^{r-1}\Theta_{k}(t)-\sum_{k=\lambda_{1}+1}^{\lambda}k^{r-1}\Theta_{k}(t)\nonumber\\
 &\leq& (\lambda_{1}+1)^{r-1}\Theta_{\lambda_{1}}(t)+ \sum_{k=\lambda_{1}+1}^{\lambda-1} \left((k+1)^{r-1}-k^{r-1} \right)
\Theta_{k}(t) \nonumber\\ 
 &\leq&(\lambda_{1}+1)^{r-1}\Theta_{\lambda_{1}}(t)+(r-1)\sum_{k=\lambda_{1}+1}^{\lambda-1}(k+1)^{r-2}\Theta_{k}(t), \quad \lambda> \lambda_{1}\geq p_{0}.
 \end{eqnarray}
Since $\beta>1$ there is $\mathcal{R}\geq p_{0}$ depending on $t, \tau, C_{1}, r$ and $\beta$ such that $r\log(k+1)-C_{1}(\tau-t)\leq0$. Hence, we get from \eqref{InstContEq4} and \eqref{InstConEq5} that
\begin{eqnarray}\label{InstConEq567}
\lefteqn{
\int_{\mathcal{R}}^{\lambda}\mu^{r}\xi(\mu,t)\ d\mu}\nonumber\\
&\leq&(\mathcal{R}+1)^{r-1}\varrho_{0}
+\varrho_{0}(r-1)\sum_{k=\mathcal{R}+1}^{\lambda-1}\exp((r-2)\log(k+1)-\theta_{1}\varrho_{0}k^{\beta-1}(\tau-t)/2)\nonumber\\
&\leq&(\mathcal{R}+1)^{r-1}\varrho_{0}
+\varrho_{0}(r-1)\sum_{k=\mathcal{R}+1}^{\lambda-1}\frac{1}{(k+1)^{2}}.
\end{eqnarray}
The series on the right-hand side of \eqref{InstConEq567} is convergent. Furthermore, we obtain
\begin{align}\label{InstConEq5678}
\int_{0}^{\mathcal{R}}\mu^{r}\xi(\mu,t)d\mu\leq \mathcal{R}^{r-1}\varrho_{0}.
\end{align}
Hence, from \eqref{InstConEq567} and \eqref{InstConEq5678}, the proof of Lemma \ref{ConLemma4} is completed.
\end{proof}


Now, let us focus to establish an equation for $\mathcal{M}^{r}(t)$ for $r\geq2$.
\begin{lemma}[Equation for $\mathcal{M}^{r}(t)$]\label{ConLemma5}
Assume that the coagulation rate $\Lambda$ satisfies $\textbf{(A)}$. Assume further that $\xi$ is a weak solution to \eqref{ConEq1}--\eqref{ConEqin1} on $[0, T)$ with $0\leq\xi^{\mathrm{in}}\in\mathcal{Y}^{+}$ s.t. $T_{\mathrm{gel}}\in(0, T]$. Then, for all $0 \leq \delta\leq t<t_{0}< T_{\mathrm{gel}}$ and $r\geq2$, we have
\begin{align}\label{ConLemma555}
\mathcal{M}^{r}(t)-\mathcal{M}^{r}(\delta)=\int_{\delta}^{t}\int_{0}^{\infty}\int_{0}^{\mu}\omega(\mu,\nu)\xi(\mu,s)\xi(\nu,s)\Lambda(\mu,\nu)\ d\nu d\mu ds,
\end{align}
where $\omega(\mu,\nu)=r\nu\mu^{r-1}-\nu^{r}$.
\end{lemma}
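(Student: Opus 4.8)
The plan is to derive \eqref{ConLemma555} by testing the weak formulation \eqref{Conweaksol} with a suitable truncation of the monomial $\mu\mapsto\mu^{r}$ and then letting the truncation parameter tend to infinity; the passage to the limit is powered by the uniform moment bounds of Lemma~\ref{ConLemma4} together with the upper bound in hypothesis~\textbf{(A)}.

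First I would fix $r\ge2$ and, for each $\lambda\ge1$, construct a test function $\varpi_{\lambda}\in\mathcal{W}^{1,\infty}(\mathbb{R}_{>0})$ with compactly supported derivative such that $\varpi_{\lambda}(\mu)=\mu^{r}$ on $[0,\lambda]$, $\mathrm{supp}\,\varpi_{\lambda}\subset[0,2\lambda]$, $0\le\varpi_{\lambda}(\mu)\le\mu^{r}$ and $|\varpi_{\lambda}'(\mu)|\le C_{r}\mu^{r-1}$ for all $\mu>0$ with $C_{r}$ independent of $\lambda$ (on $[\lambda,2\lambda]$ one rescales a fixed $C^{1}$ cut-off profile, using $r\ge2$ so that $\lambda^{r-1}\le\mu^{r-1}$ there), and $\varpi_{\lambda}(\mu)\to\mu^{r}$, $\varpi_{\lambda}'(\mu)\to r\mu^{r-1}$ pointwise as $\lambda\to\infty$. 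With $\varpi_{1,\lambda}(\mu,\nu)=\nu\varpi_{\lambda}'(\mu)-\varpi_{\lambda}(\nu)$ as in \eqref{Identity1}, inserting $\varpi=\varpi_{\lambda}$ into \eqref{Conweaksol} between the times $\delta$ and $t$ yields
\[
\int_{0}^{\infty}\varpi_{\lambda}(\mu)\big[\xi(\mu,t)-\xi(\mu,\delta)\big]\,d\mu=\int_{\delta}^{t}\int_{0}^{\infty}\int_{0}^{\mu}\varpi_{1,\lambda}(\mu,\nu)\,\xi(\mu,s)\xi(\nu,s)\Lambda(\mu,\nu)\,d\nu\,d\mu\,ds ,
\]
all integrals being finite for each $\lambda$ by \textbf{(A)} and Lemma~\ref{ConLemma4}.

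Then I would let $\lambda\to\infty$. On the left-hand side, $\delta,t\in[0,t_{0})$ and Lemma~\ref{ConLemma4} give $\mu^{r}\xi(\mu,t),\mu^{r}\xi(\mu,\delta)\in L^{1}(\mathbb{R}_{>0})$, so dominated convergence (majorant $\mu^{r}\big(\xi(\mu,t)+\xi(\mu,\delta)\big)$) produces $\mathcal{M}^{r}(t)-\mathcal{M}^{r}(\delta)$. On the right-hand side, $\varpi_{1,\lambda}(\mu,\nu)\to r\nu\mu^{r-1}-\nu^{r}=\omega(\mu,\nu)$ pointwise, and on the integration region $\{0<\nu<\mu\}$ one has $\nu^{r}\le\nu\mu^{r-1}$, hence $|\varpi_{1,\lambda}(\mu,\nu)|\le(C_{r}+1)\nu\mu^{r-1}$; combined with the upper bound in \textbf{(A)} the integrand is dominated, uniformly in $\lambda$, by $(C_{r}+1)\theta_{2}\,\nu\mu^{r-1}(1+\mu)^{\gamma}(1+\nu)^{\gamma}\xi(\mu,s)\xi(\nu,s)$, whose integral over $(\delta,t)\times\{0<\nu<\mu\}$ factorizes into a product of moments of orders bounded by $r-1+\gamma$ and $1+\gamma$. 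Splitting the $\mu$ (resp.\ $\nu$) integrals at $1$ reduces these to integer-order moments plus $\|\xi\|_{\mathcal{Y}}$, which are finite and bounded on $[0,t_{0})$ by Lemma~\ref{ConLemma4} and the definition of a weak solution. Dominated convergence therefore gives the claimed right-hand side of \eqref{ConLemma555}.

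The only genuinely delicate point is producing the family $\varpi_{\lambda}$ with the $\lambda$-uniform derivative bound $|\varpi_{\lambda}'(\mu)|\le C_{r}\mu^{r-1}$ and checking that this bound, together with \textbf{(A)}, supplies a $\lambda$-independent integrable majorant for the collision integral; once that is in place, both limits are routine. As in the earlier lemmas one could instead work formally with $\varpi(\mu)=\mu^{r}\chi_{(0,\lambda)}(\mu)$, regarding it as a limit of admissible test functions, which leads to the same identity.
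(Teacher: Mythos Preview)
Your proposal is correct and follows the same route as the paper: truncate $\mu\mapsto\mu^{r}$, plug into the weak formulation \eqref{Conweaksol}, bound the collision integrand via the upper bound in \textbf{(A)} against moments controlled by Lemma~\ref{ConLemma4}, and pass to the limit by dominated convergence. The only difference is cosmetic: the paper works directly with $\varpi(\mu)=\mu^{r}\chi_{(0,\lambda)}(\mu)$ (treated as admissible, as in the earlier lemmas), whereas you build a genuine $\mathcal{W}^{1,\infty}$ cut-off with the uniform derivative bound $|\varpi_{\lambda}'|\le C_{r}\mu^{r-1}$---a point you yourself flag in your last sentence.
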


\begin{proof}
 For $\lambda\geq1$, setting $\varpi(\mu)=\mu^{r}\chi_{(0,\lambda)}(\mu)$ into \eqref{Identity1}, we obtain
 \begin{equation*}
 \varpi_{1}(\mu, \nu)=\begin{cases}
r\nu\mu^{r-1}-\nu^{r},\ & \text{if}\ (\mu, \nu ) \in (0, \lambda) \times (0, \mu),\ \\
- \nu^{r}, \  &  \text{if}\ (\mu,\nu) \in [\lambda, \infty) \times (0, \lambda),\ \\
0,\ &  \text{if}\ (\mu, \nu)\in [\lambda, \infty) \times [\lambda, \mu).
\end{cases}
\end{equation*}
 Inserting the value of $\varpi$ and $\varpi_{1}$ into \eqref{Conweaksol}, we get
\begin{eqnarray}\label{ConLemmaeq51}
\int_{0}^{\lambda}\mu^{r}\big[\xi(\mu,t)-\xi(\mu,\delta)\big]\ d\mu
&=&\int_{t}^{\varepsilon}
\int_{0}^{\lambda}\int_{0}^{\mu}(r\nu\mu^{r-1}-\nu^{r})\Lambda(\mu,\nu)\xi(\mu,s)\xi(\nu,s)d\nu d\mu ds\nonumber\\
&&-\int_{t}^{\varepsilon}
\int_{\lambda}^{\infty}\int_{0}^{\lambda}\nu^{r}\Lambda(\mu,\nu)\xi(\mu,s)\xi(\nu,s)d\nu d\mu ds.
\end{eqnarray}
 Let us estimate both terms on the right-hand side of \eqref{ConLemmaeq51}, separately. By assumption $\textbf{(A)}$ and 
 Lemma~\ref{ConLemma4}, we can simplify the first integral term as
\begin{eqnarray}\label{ConLemmaeq52}
\lefteqn{\int_{0}^{\lambda}\int_{0}^{\mu} (r\nu\mu^{r-1}-\nu^{r})\Lambda(\mu,\nu)\xi(\mu,s)\xi(\nu,s)\ d\nu d\mu}\nonumber\\
&\leq& \theta_{2}r\int_{0}^{\lambda}\int_{0}^{\mu}\nu\mu^{r-1}(1+\mu)^{\gamma}(1+\nu)^{\gamma}\xi(\mu,s)\xi(\nu,s)\ d\nu d\mu \nonumber\\
&\leq&2^{2\gamma}\theta_{2}r\int_{0}^{\infty}\int_{0}^{\infty} (\mu^{r-1}+2\mu^{\gamma+r-1}+\mu^{2\gamma+r-1})\nu\xi(\mu,s)\xi(\nu,s)\ d\nu d\mu\nonumber\\
&\leq&2^{2\gamma}\theta_{2}r\varrho_{0}(\|\xi\|_{r-1}+\|\xi\|_{\gamma+r-1}+\|\xi\|_{2\gamma+r-1})\nonumber\\
&\leq&\Omega_{1}<\infty, \quad \mbox{for}\quad 0 \leq \delta\leq t<t_{0}< T_{\mathrm{gel}},
\end{eqnarray}
where $\Omega_{1}$ is a constant independent of $\lambda$ and $s$. Similarly, we conclude from Lemma \ref{ConLemma4} that
\begin{equation}\label{ConLemmaeq53}
\int_{\lambda}^{\infty}\int_{0}^{\lambda}\nu^{r}\Lambda(\mu,\nu)\xi(\mu,s)\xi(\nu,s)\ d\nu d\mu
\leq \Omega_{1}<\infty,\quad \mbox{for}\quad 0 \leq \delta\leq t<t_{0}< T_{\mathrm{gel}}.
\end{equation}

Thanks to \eqref{ConLemmaeq52} and \eqref{ConLemmaeq53}, we may pass to the limit as $\lambda\rightarrow\infty$ in \eqref{ConLemmaeq51} and conclude, from the dominated convergence theorem, that \eqref{ConLemma555} is valid.
\end{proof}


\begin{lemma}[Bound of the time interval before instantaneous gelation occurs]\label{ConLemma6}
Assume that $\textbf{(A)}$ holds and let $\xi$ be a weak solution of \eqref{ConEq1}--\eqref{ConEqin1} on $(0, T]$ with $0\leq\zeta^{\mathrm{in}}\in\mathcal{Y}^{+}$. Suppose $0<\delta< t\leq \tau<t_{0}< T_{\mathrm{gel}}$ where $\delta$ and $\tau$ are fixed and $T_{\mathrm{gel}}\in(0, T]$. Then, for each $r\geq2$, we have
\begin{align}\label{timeineq}
t\leq \delta+\frac{1}{(\beta-1)\theta_{1}\varrho_{0}^{1-\sigma}}\bigg[\frac{1}{\mathcal{M}^{r}(\delta)}\bigg]^{\sigma},
\end{align}
where $\sigma:=(\beta-1)/(r-1)$ and $\beta>1$.
\end{lemma}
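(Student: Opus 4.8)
The plan is to convert the identity for $\mathcal{M}^{r}$ provided by Lemma~\ref{ConLemma5} into a scalar differential inequality $\frac{d}{ds}\mathcal{M}^{r}(s)\ge c\,(\mathcal{M}^{r}(s))^{1+\sigma}$ on $(\delta,t)$, and then observe that \eqref{timeineq} is exactly the statement that such an inequality cannot be carried past the blow-up time of the comparison ODE. First I would record that mass is conserved before gelation: testing \eqref{Conweaksol} with $\varpi(\mu)=\mu\chi_{(0,\lambda)}(\mu)$ gives $\int_{0}^{\lambda}\mu\,\xi(\mu,t)\,d\mu\le\int_{0}^{\lambda}\mu\,\xi^{\mathrm{in}}(\mu)\,d\mu$, hence $\mathcal{M}^{1}(t)\le\varrho_{0}$ after $\lambda\to\infty$, and the definition of $T_{\mathrm{gel}}$ then forces $\mathcal{M}^{1}(s)=\varrho_{0}$ for every $s\in[0,T_{\mathrm{gel}})$. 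By Lemma~\ref{ConLemma4} all moments appearing below are finite on $[0,t_{0})$, so by Lemma~\ref{ConLemma5} the map $t\mapsto\mathcal{M}^{r}(t)$ is locally absolutely continuous there and we may differentiate; we may also assume $\mathcal{M}^{r}(\delta)\in(0,\infty)$, since it is finite by Lemma~\ref{ConLemma4} and positive whenever $\varrho_{0}>0$ (if $\varrho_{0}=0$ then $\xi\equiv0$ and \eqref{timeineq} is trivial).

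Next I would estimate the growth rate. On $\{0<\nu<\mu\}$ and for $r\ge2$ one has $\omega(\mu,\nu)=\nu\,(r\mu^{r-1}-\nu^{r-1})\ge(r-1)\,\nu\mu^{r-1}\ge0$, while \textbf{(A)} gives $\Lambda(\mu,\nu)\ge\theta_{1}(\mu^{\beta}+\nu^{\beta})\ge\theta_{1}\mu^{\beta}$. Inserting these into \eqref{ConLemma555} and differentiating,
\[
\frac{d}{ds}\mathcal{M}^{r}(s)\ \ge\ (r-1)\theta_{1}\int_{0}^{\infty}\mu^{r+\beta-1}\xi(\mu,s)\Big(\int_{0}^{\mu}\nu\,\xi(\nu,s)\,d\nu\Big)\,d\mu .
\]
Now I would use mass conservation to replace the one-sided inner integral by the full mass: $\int_{0}^{\mu}\nu\,\xi(\nu,s)\,d\nu$ increases to $\varrho_{0}$ as $\mu\to\infty$, and exploiting the uniform (Dini) convergence of the partial masses on $[0,t_{0}]$ used in the proof of Lemma~\ref{ConLemma4} --- together, for the sharp constant, with the tail bound \eqref{InstContEq4}, which renders the contribution of small $\mu$ negligible against the high moment --- one arrives at
\[
\frac{d}{ds}\mathcal{M}^{r}(s)\ \ge\ (r-1)\theta_{1}\,\varrho_{0}\,\mathcal{M}^{r+\beta-1}(s),\qquad \delta<s<t .
\]
This is the step that carries essentially all the work, and it is the genuinely OHS-specific difficulty: the transport term in \eqref{ConEq1} produces a one-sided inner integral, whereas for the Smoluchowski equation the corresponding double integral runs over all of $\mathbb{R}_{>0}^{2}$ and the full mass is available at once.

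To close the inequality I would use the log-convexity of moments. With $\theta:=(r-1)/(r+\beta-2)$ we have $r=(1-\theta)\cdot1+\theta\,(r+\beta-1)$, so Hölder's inequality yields $\mathcal{M}^{r}(s)\le(\mathcal{M}^{1}(s))^{1-\theta}(\mathcal{M}^{r+\beta-1}(s))^{\theta}=\varrho_{0}^{\,1-\theta}(\mathcal{M}^{r+\beta-1}(s))^{\theta}$; since $1/\theta=1+\sigma$ with $\sigma=(\beta-1)/(r-1)$, this rearranges to $\mathcal{M}^{r+\beta-1}(s)\ge\varrho_{0}^{-\sigma}(\mathcal{M}^{r}(s))^{1+\sigma}$, and therefore
\[
\frac{d}{ds}\mathcal{M}^{r}(s)\ \ge\ (r-1)\theta_{1}\,\varrho_{0}^{\,1-\sigma}\,(\mathcal{M}^{r}(s))^{1+\sigma},\qquad \delta<s<t .
\]
Finally, writing $\phi:=\mathcal{M}^{r}$ --- continuous, finite and strictly positive on $[\delta,t]$ (positivity also from $\mathcal{M}^{r}(s)\ge\int_{1}^{\infty}\mu\,\xi(\mu,s)\,d\mu>0$ by Theorem~\ref{theorem3}) --- a standard Bihari/Gronwall comparison applied to $\phi(t)-\phi(\delta)\ge(r-1)\theta_{1}\varrho_{0}^{1-\sigma}\int_{\delta}^{t}\phi(s)^{1+\sigma}\,ds$ gives, after dividing the associated ODE by $\phi^{1+\sigma}$ and integrating, $\phi(\delta)^{-\sigma}-\phi(t)^{-\sigma}\ge\sigma(r-1)\theta_{1}\varrho_{0}^{1-\sigma}(t-\delta)$. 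Using $\sigma(r-1)=\beta-1$ and $\phi(t)^{-\sigma}\ge0$ yields $(\beta-1)\theta_{1}\varrho_{0}^{1-\sigma}(t-\delta)\le(\mathcal{M}^{r}(\delta))^{-\sigma}$, which is \eqref{timeineq}. The hard part is the second paragraph --- extracting the factor $\varrho_{0}$ (equivalently, the moment $\mathcal{M}^{r+\beta-1}$) from the truncated integral $\int_{0}^{\mu}\nu\,\xi(\nu,s)\,d\nu$; mass conservation before $T_{\mathrm{gel}}$, the moment interpolation, and the final one-dimensional ODE argument are all routine.
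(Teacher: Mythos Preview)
Your overall plan---Lemma~\ref{ConLemma5} plus $\textbf{(A)}$ to get a differential inequality, H\"older interpolation to close it, then the comparison ODE---matches the paper, and the H\"older and ODE steps are fine. The gap is exactly in the step you flag as the hard one. You only invoke $\Lambda(\mu,\nu)\ge\theta_1\mu^\beta$ on $\{\nu<\mu\}$, which leaves $\int_0^\mu \nu\,\xi(\nu,s)\,d\nu$ in place of the full mass. Your proposed fix via Dini convergence and the tail bound~\eqref{InstContEq4} goes the wrong way: $\int_0^\mu\nu\,\xi\,d\nu\le\varrho_0$, and any splitting argument (pick $\mu_\varepsilon$ so that the inner integral exceeds $(1-\varepsilon)\varrho_0$ for $\mu\ge\mu_\varepsilon$, treat small $\mu$ separately) yields at best $(1-\varepsilon)\varrho_0\,\mathcal{M}^{r+\beta-1}(s)-C_\varepsilon$, not the clean inequality with the sharp constant $\varrho_0$ that you wrote down. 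That weakened form would still suffice for Theorem~\ref{InstaneousGelation}, but it does not give \eqref{timeineq} as stated.

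The paper sidesteps this by using \emph{both} terms of the lower bound in $\textbf{(A)}$. On $\{0<\nu<\mu\}$ one has
\[
\nu\mu^{r-1}\Lambda(\mu,\nu)\ \ge\ \theta_1\,\nu\mu^{r-1}(\mu^\beta+\nu^\beta)\ =\ \theta_1\big(\nu\mu^{r+\beta-1}+\nu^{\beta+1}\mu^{r-1}\big),
\]
and since $r\ge2$ and $\nu<\mu$ give $\nu^{\beta+1}\mu^{r-1}\ge\mu\,\nu^{\beta+r-1}$, the integrand dominates $\theta_1(\nu\mu^{r+\beta-1}+\mu\nu^{r+\beta-1})$. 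This is exactly the symmetrization of $\theta_1\mu\nu^{r+\beta-1}$, so the half-range integral $\int_0^\infty\!\int_0^\mu(\cdots)\,d\nu\,d\mu$ dominates the full integral $\theta_1\int_0^\infty\!\int_0^\infty\mu\nu^{r+\beta-1}\xi(\mu,s)\xi(\nu,s)\,d\nu\,d\mu=\theta_1\varrho_0\,\mathcal{M}^{r+\beta-1}(s)$ directly, with the exact constant and no asymptotic argument. Replacing your use of $\Lambda\ge\theta_1\mu^\beta$ by $\Lambda\ge\theta_1(\mu^\beta+\nu^\beta)$ in this way closes the gap.
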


\begin{proof}
We observe, from Lemma \ref{ConLemma5} and assumption $\textbf{(A)}$, that
\begin{eqnarray}\label{ConLemma61Time}
 \mathcal{M}^{r}(t)-\mathcal{M}^{r}(\delta)
 &=&\int_{\delta}^{t}\int_{0}^{\infty}\int_{0}^{\mu}(r\nu\mu^{r-1}-\nu^{r})\Lambda(\mu,\nu)\xi(\mu,s)\xi(\nu,s)\ d\nu d\mu ds\nonumber\\
 &\geq&(r-1)\int_{\delta}^{t}\int_{0}^{\infty}\int_{0}^{\mu}\nu\mu^{r-1}\Lambda(\mu,\nu)\xi(\mu,s)\xi(\nu,s)\ d\nu d\mu ds\nonumber\\
 &\geq&(r-1)\theta_{1}\int_{\delta}^{t}\int_{0}^{\infty}\int_{0}^{\infty}\mu\nu^{\beta+r-1}\xi(\mu,s)\xi(\nu,s)\ d\nu d\mu ds,
\end{eqnarray}
where $0<\delta<t<t_{0}<T_{\mathrm{gel}}$. Let us apply H\"{o}lder's inequality to obtain
\begin{eqnarray*}
\mathcal{M}^{r}(t)&=&\int_{0}^{\infty}\mu^{r}\xi(\mu,t)\ d\mu\nonumber\\
&=&\int_{0}^{\infty}\mu^{(r+\beta-1)/(\sigma+1)}\mu^{\sigma/(\sigma+1)}\xi(\mu,t)^{1/(\sigma+1)}
\xi(\mu,t)^{\sigma/(\sigma+1)}\ d\mu\nonumber\\
&\leq&\bigg(\int_{0}^{\infty}\bigg(\mu^{(r+\beta-1)/(\sigma+1)}\xi(\mu,t)^{1/(\sigma+1)}\bigg)^{\sigma+1}\ d\mu\bigg)^{1/(\sigma+1)}\nonumber\\
&& \times\bigg(\int_{0}^{\infty}\bigg(\mu^{\sigma/(\sigma+1)}\xi(\mu,t)^{\sigma/(\sigma+1)}\bigg)^{(\sigma+1)/\sigma}\ d\mu\bigg)^{\sigma/(\sigma+1)}\nonumber\\
&\leq&\bigg(\int_{0}^{\infty}\mu^{r+\beta-1}\xi(\mu,t)\ d\mu\bigg)^{1/(\sigma+1)}
\bigg(\int_{0}^{\infty}\mu\xi(\mu,t)\ d\mu\bigg)^{\sigma/(\sigma+1)}\nonumber\\
&\leq&(\mathcal{M}^{r+\beta-1}(t))^{1/(\sigma+1)}(\varrho_0)^{\sigma/(\sigma+1)},
\end{eqnarray*}
where $\sigma=(\beta-1)/(r-1)$. We notice, from the above inequality, that
\begin{align}\label{ConHolderineq}
(\mathcal{M}^{r}(t))^{\sigma+1}(\varrho_0)^{-\sigma}\leq&\mathcal{M}^{r+\beta-1}(t).
\end{align}

Consequently, from \eqref{ConLemma61Time} and \eqref{ConHolderineq}, we obtain
\begin{align}\label{ConLemmaeq61}
\mathcal{M}^{r}(t)
\geq \mathcal{M}^{r}(\delta)+(r-1)\theta_{1}\varrho_{0}^{1-\sigma}\int_{\delta}^{t}(\mathcal{M}^{r}(s))^{\sigma+1}ds,\quad 0<\delta<t<t_{0}<T_{\mathrm{gel}}.
\end{align}
Now, we introduce
\begin{align}\label{ConLemmaeq6661}
\mathcal{Q}^{r}(t):=\mathcal{M}^{r}(\delta)+(r-1)\theta_{1}\varrho_{0}^{1-\sigma}\int_{\delta}^{t}(\mathcal{M}^{r}(s))^{\sigma+1}\ ds.
\end{align}
From \eqref{ConLemmaeq61} and \eqref{ConLemmaeq6661}, we get
\begin{align}\label{ConLemmaeq666}
\mathcal{M}^{r}(t)\geq \mathcal{Q}^{r}(t).
\end{align}
Next, we infer from \eqref{ConLemmaeq61}, and \eqref{ConLemmaeq666} that 
\[
\frac{d}{dt}(\mathcal{Q}^{r}(t)) = \theta_{1}\varrho_{0}^{1-\sigma}(r-1)(\mathcal{M}^{r}(t))^{\sigma+1} \ge \theta_{1}\varrho_{0}^{1-\sigma}(r-1)(\mathcal{Q}^{r}(t))^{\sigma+1},\quad t\in[\delta,t_{0}).
\] 
Now, taking integration with respect to $t$ from $\delta$ to $t_{0}$, we have
\[
\int_{\delta}^{t_{0}}\frac{1}{(\mathcal{Q}^{r}(t))^{\sigma+1}}\ d\mathcal{Q}^{r}(t) \ge \theta_{1}\varrho_0^{1-\sigma}(r-1)\int_{\delta}^{t_{0}}\ dt, 
\]
so that 
\begin{equation} \label{Converlimit}
0\leq(\mathcal{Q}^{r}(t_{0}))^{-\sigma}\le(\mathcal{Q}^{r}(\delta))^{-\sigma}+\sigma\theta_{1}\varrho_{0}^{1-\sigma}(r-1)(\delta-t_{0}).
\end{equation}
The definition of $\sigma$ implies that 
 \begin{align*}
(Q^{r}(t))^{-\sigma}\ge\frac{1}{(\mathcal{Q}^{r}(\delta))^{\sigma}}+(\beta-1)\theta_{1}\varrho_{0}^{1-\sigma}(\delta-t)\geq0.
\end{align*}
Consequently, we obtain
\begin{align*}
t\leq  \delta+\frac{1}{(\beta-1)\theta_{1}\varrho_{0}^{(r-\beta)/(r-1)}}\bigg[\frac{1}{\mathcal{Q}^{r}(\delta)}\bigg]^{\sigma},
\end{align*}
from which \eqref{timeineq} is easily deduced.
\end{proof}

\textbf{Proof of the existence of instantaneous gelation:}~Assume for contradiction that $T_{\mathrm{gel}}\in(0,\infty]$. From Lemma \ref{ConLemma6}, we get an estimate for a solution of \eqref{ConEq1}--\eqref{ConEqin1} on $[0, T)$ as
\begin{align}\label{LemmaCon62}
t\leq\delta+\frac{1}{(\beta-1)\theta_{1}\varrho_{0}^{(r-\beta)/(r-1)}}\bigg[\frac{1}{\mathcal{M}^{r}(\delta)}\bigg]_{,}^{(\beta-1)/(r-1)}
\end{align}
where $r\geq2$ and $0\leq\delta<t<T_{\mathrm{gel}}$. Now, we may pass to the limit as $r\rightarrow\infty$ in \eqref{LemmaCon62} and conclude from Lemma~\ref{lemmapositiv1} that $t\leq\delta$ for $\delta\in(0,t)$. Hence a contradiction is constructed. Therefore, it is  $T_{\mathrm{gel}}=0$.\\


In the following section, we show the mass conservation property of the solution to the OHS model if it exists.

\section{Mass-conserving solutions}
To prove the equality \eqref{MassConseDisCon}, we consider the following form of coagulation kernels:
\begin{equation}\label{Conhyppmcs1}
   \Lambda(\mu,\nu)=\varphi(\mu)+\varphi(\nu)+\Psi(\mu,\nu),\quad (\mu,\nu)\in(0,\infty)^{2},
\end{equation}
where $\varphi(\mu)=\theta_{1}\mu^{\beta}$ and $0\leq\Psi(\mu,\nu)\leq\mathcal{K}(\mu+\nu)$ for all $\beta>1$ and some constants $\theta_{1},~\mathcal{K}>0$. Here, we notice that the class of coagulation kernel \eqref{Conhyppmcs1} satisfies hypothesis $\textbf{(A)}$.

\begin{thm}[Mass conservation]\label{ConMassTheorem}
Assume that the coagulation kernel $\Lambda$ satisfies \eqref{Conhyppmcs1}. If $\xi$ is a weak solution to \eqref{ConEq1}--\eqref{ConEqin1} with $0\leq \xi^{\mathrm{in}}\in\mathcal{Y}^{+}$ on $[0, T)$, where $T\in(0, \infty]$, then $\xi$ satisfies mass conservation property \eqref{MassConseDisCon} for all $0\leq t <T$.
\end{thm}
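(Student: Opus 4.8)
The plan is to establish mass conservation by showing that the total mass $\mathcal{M}^1(t)$ neither increases nor decreases, using the weak formulation \eqref{Conweaksol} with the truncated test functions $\varpi(\mu)=\mu\chi_{(0,\lambda)}(\mu)$ and then passing to the limit $\lambda\to\infty$. First I would recall the identity obtained in the proof of Lemma~\ref{ConLemma4}: for $0\le\tau<t<T$,
\[
\Gamma_\lambda(t)-\Gamma_\lambda(\tau)=-\int_\tau^t\int_\lambda^\infty\int_0^\lambda \nu\,\Lambda(\mu,\nu)\xi(\mu,s)\xi(\nu,s)\,d\nu\,d\mu\,ds,
\]
where $\Gamma_\lambda(t)=\int_0^\lambda\mu\xi(\mu,t)\,d\mu$. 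Since the right-hand side is nonpositive, $\Gamma_\lambda(t)\le\Gamma_\lambda(\tau)$, and letting $\lambda\to\infty$ by monotone convergence gives $\mathcal{M}^1(t)\le\mathcal{M}^1(\tau)$; in particular $\mathcal{M}^1$ is nonincreasing and bounded by $\varrho_0$. The entire content of the theorem is therefore to prove the reverse inequality, i.e. that the loss term vanishes in the limit.

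The key estimate is to bound the loss term using the special structure \eqref{Conhyppmcs1}. Write $\Lambda(\mu,\nu)\le\theta_1\mu^\beta+\theta_1\nu^\beta+\mathcal{K}(\mu+\nu)$ and split the triple integral accordingly. For the term $\int_\lambda^\infty\int_0^\lambda \nu\,\theta_1\nu^\beta\,\xi(\mu,s)\xi(\nu,s)\,d\nu\,d\mu$ I would use that on $\mu\ge\lambda$ we have $1\le \mu/\lambda$, hence the $\mu$-integral is bounded by $\lambda^{-1}\int_\lambda^\infty\mu\xi(\mu,s)\,d\mu\le\lambda^{-1}\varrho_0\to 0$, while $\int_0^\lambda\nu^{\beta+1}\xi(\nu,s)\,d\nu\le\mathcal{M}^{\beta+1}(s)$, which is finite and uniformly bounded on $[0,t_0]$ by Lemma~\ref{ConLemma4} once we know $t_0<T_{\mathrm{gel}}$. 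The remaining pieces $\theta_1\nu\mu^\beta$, $\mathcal{K}\nu\mu$ and $\mathcal{K}\nu^2$ are treated similarly: each time the integration over the ``far'' variable on its unbounded range is controlled by a fixed higher moment that is finite by Lemma~\ref{ConLemma4}, and the integration over the other variable produces a prefactor $\lambda^{1-\beta}$ or $\lambda^{-1}$ (or, for the $\mathcal{K}\nu\mu$ term, is bounded by splitting $\mu=\mu\cdot 1\le\mu\cdot\mu/\lambda$ on $\mu\ge\lambda$). Thus the loss term over $[\tau,t]$ is $O(\lambda^{1-\beta})+O(\lambda^{-1})\to 0$ as $\lambda\to\infty$, uniformly on $[0,t_0]$.

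Combining the two directions gives $\mathcal{M}^1(t)=\mathcal{M}^1(\tau)$ for all $0\le\tau<t<t_0$, hence for all $0\le t<T$ since Theorem~\ref{InstaneousGelation} forces $T_{\mathrm{gel}}=0$ unless\,---\,wait, this is the delicate point I would flag as the main obstacle: Theorem~\ref{InstaneousGelation} seems to say gelation is instantaneous for all kernels satisfying \textbf{(A)}, which would make the hypothesis of a global mass-conserving weak solution vacuous. So either the argument here must be read as conditional (``if a weak solution exists on $[0,T)$, then while it exists it conserves mass'', which is exactly what lets the nonexistence conclusion of Section~3 be upgraded to nonexistence at any time interval), or one must be careful that the finiteness of higher moments used above is available. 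The cleanest reading is: assuming a weak solution $\xi$ exists on $[0,T)$, Lemma~\ref{ConLemma4} gives finiteness of all moments on some $[0,t_0)$ (taking $t_0<T_{\mathrm{gel}}$; if $T_{\mathrm{gel}}=0$ there is nothing to prove as the statement is then about $t\in[0,0)=\emptyset$), and the limiting argument above yields conservation on $[0,t_0)$; iterating, or more directly using that $\mathcal{M}^1$ is constant wherever the solution exists, extends this to all of $[0,T)$. I would therefore structure the proof as: (i) the $\le$ direction via monotonicity of $\Gamma_\lambda$; (ii) the $\ge$ direction via the $\lambda^{1-\beta}$ decay estimate on the loss term, invoking Lemma~\ref{ConLemma4} for the needed moment bounds; (iii) conclude equality and extend in $t$. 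The main obstacle is purely the loss-term estimate in step~(ii), and it is exactly the hypothesis $\beta>1$ together with $\Psi$ growing only linearly that makes the prefactor decay.
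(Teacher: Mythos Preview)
Your loss-term estimate is structurally fine, but the source of the moment bound you invoke is circular. Lemma~\ref{ConLemma4} is stated under the hypothesis $T_{\mathrm{gel}}\in(0,T]$, and its proof uses mass conservation on $[0,T_{\mathrm{gel}})$ (via $\Gamma_\lambda+\Theta_\lambda=\varrho_0$) in an essential way. But Theorem~\ref{ConMassTheorem} is precisely the ingredient that, together with Theorem~\ref{InstaneousGelation}, yields the nonexistence Corollary~\ref{Concor1}: if a weak solution exists on $[0,T)$, mass conservation forces $T_{\mathrm{gel}}\ge T>0$, contradicting $T_{\mathrm{gel}}=0$. So the proof of Theorem~\ref{ConMassTheorem} must not presuppose $T_{\mathrm{gel}}>0$. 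Your attempted escape (``if $T_{\mathrm{gel}}=0$ there is nothing to prove'') is wrong: the claim is conservation on $[0,T)$, not on $[0,T_{\mathrm{gel}})$; if $T_{\mathrm{gel}}=0$ while $T>0$ the conclusion would be false, not vacuous.

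The paper avoids the circularity by extracting the needed moment control directly from the integrability condition in Definition~\ref{Condefinition}, namely $\mu\Lambda(\mu,\nu)\xi(\mu,s)\xi(\nu,s)\in L^1(\mathbb{R}_{>0}^2\times(0,t))$, combined with the lower bound $\Lambda\ge\theta_1\mu^\beta$. Since $\xi\in\mathcal{C}_w([0,T);L^1)$ and $\xi^{\mathrm{in}}\not\equiv0$, there are $t_0>0$ and $\varepsilon_1>0$ with $\mathcal{M}^0(\xi(s))\ge\varepsilon_1$ on $[0,t_0]$, whence $\mathcal{M}^\beta(\xi)\in L^1(0,t_0)$ (an $L^1$-in-time bound, not a uniform sup bound). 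On the loss region $\mu\ge\lambda\ge\nu$, $\lambda\ge1$, one has $\Lambda(\mu,\nu)\le 2(\theta_1+\mathcal{K})\mu^\beta$, so the loss term is bounded by $2(\theta_1+\mathcal{K})\sup_s\|\xi(s)\|_{\mathcal{Y}}\int_0^t\int_\lambda^\infty\mu^\beta\xi(\mu,s)\,d\mu\,ds$, which tends to $0$ as $\lambda\to\infty$ by dominated convergence with majorant $\mathcal{M}^\beta(\xi(\cdot))\in L^1(0,t_0)$. No $\lambda^{1-\beta}$ rate and no appeal to Lemma~\ref{ConLemma4} are needed; the argument is self-contained from the definition of weak solution. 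Your estimates can be repaired along these lines, but as written they rest on a lemma whose hypotheses you cannot verify.
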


\begin{proof}
First, let us assume that $\xi$ is a weak solution to the coagulation equation \eqref{ConEq1} on $[0, T)$ in the sense of
Definition~\ref{Condefinition}. Since $\xi^{\mathrm{in}}\not\equiv0$ and $\mathcal{M}^{0}(\zeta)\in \mathcal{C}([0, T))$, there are $t_{0}\in[0, T)$ and $\varepsilon_{1}>0$ such that
\begin{align}\label{lemmacon41}
\mathcal{M}^{0}(\xi)\geq\varepsilon_{1},\quad t\in[0,t_{0}].
\end{align}
From \eqref{Conhyppmcs1}, \eqref{lemmacon41} and Definition \ref{Condefinition}, we get
\begin{align*}
\varepsilon_{1}\int_{0}^{t_{0}}\mathcal{M}^{\iota}(\xi(t))\ dt
\leq&\int_{0}^{t_{0}}\mathcal{M}^{0}(\xi(t))\mathcal{M}^{\iota}(\xi(t))\ dt\nonumber\\
\leq&\frac{1}{2}\int_{0}^{t_{0}}\int_{0}^{\infty}\int_{0}^{\infty}\Lambda(\mu,\nu)\xi(t)\xi(t)\ dt<\infty,
\end{align*}
and this implies
\begin{align}\label{boundCon1}
\mathcal{M}^{\iota}(\xi)\in L^{1}(0, t_{0})\quad \mbox{for}\quad \mbox{all}~\iota\geq0.
\end{align}
For all $\mu\in(0,\infty)$, set $\varpi(\mu)=\mu\chi_{(0,\lambda)}(\mu)$ into \eqref{Identity1} to get
 \begin{equation*}
 \varpi_{1}(\mu, \nu)=\begin{cases}
0,\ & \text{if}\ (\mu, \nu ) \in (0, \lambda) \times (0, \mu),\ \\
- \nu, \  &  \text{if}\ (\mu,\nu) \in [\lambda, \infty) \times (0, \lambda),\ \\
0,\ &  \text{if}\ (\mu, \nu)\in [\lambda, \infty) \times [\lambda, \mu).
\end{cases}
\end{equation*}
Inserting the value of $\varpi$ and $\varpi_{1}$ into \eqref{Conweaksol}, we end up with
\begin{align}\label{ConMassconser1}
\int_{0}^{\lambda}\mu\big[\xi(\mu,t)-\xi(\mu,0)]\\big d\mu=-&\int_{0}^{t}\int_{\lambda}^{\infty}\int_{0}^{\lambda}\nu\Lambda(\mu,\nu) \zeta(\mu,s)\xi(\nu,s)\ d\nu d\mu ds.
\end{align}
We simplify the right-hand side of \eqref{ConMassconser1}, using the properties of the coagulation kernel, 
 \begin{eqnarray}\label{Conmassconser4}
\lefteqn{\int_{0}^{t}\int_{\lambda}^{\infty}\int_{0}^{\lambda}\nu\Lambda(\mu,\nu) \xi(\mu,s)\xi(\nu,s)\ d\nu d\mu ds
}\nonumber \\
&\leq&2(\theta_{1}+\mathcal{K})\int_{0}^{t}\int_{\lambda}^{\infty}\int_{0}^{\lambda}\nu\mu^{\beta}\xi(\mu,s)\xi(\nu,s)\ d\nu d\mu ds\nonumber\\
&\leq&2(\theta_{1}+\mathcal{K})\sup_{t\in[0,T)}\|\xi\|_{\mathcal{Y}}\int_{0}^{t}\int_{\lambda}^{\infty}\mu^{\beta}\xi(\mu,s) \ d\mu ds.
\end{eqnarray}
Finally, \eqref{boundCon1}, \eqref{Conmassconser4}, and the dominated convergence theorem allow to conclude that the solution $\zeta$ satisfies \eqref{MassConseDisCon} as $\lambda\rightarrow \infty$ in \eqref{ConMassconser1}.
\end{proof}

\begin{corollary}[Nonexistence of a weak solution.]\label{Concor1}
Suppose the coagulation kernel $\Lambda$ satisfies \eqref{Conhyppmcs1}. Let $\xi^{\mathrm{in}}\in\mathcal{Y}^{+}$ be  non-trivial initial data. Then, the equation \eqref{ConEq1}--\eqref{ConEqin1} has no weak solution, defined in $[0, T)$, for any $T>0$.
\end{corollary}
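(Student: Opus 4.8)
The plan is to combine the two main results already proved: Theorem \ref{InstaneousGelation} (instantaneous gelation, $T_{\mathrm{gel}}=0$) and Theorem \ref{ConMassTheorem} (mass conservation for kernels of the form \eqref{Conhyppmcs1}), exploiting the observation — already noted in the paper right after \eqref{Conhyppmcs1} — that any kernel satisfying \eqref{Conhyppmcs1} also satisfies hypothesis \textbf{(A)} with the same $\theta_1$, $\beta$, and with $\gamma$ chosen appropriately (e.g.\ $\gamma$ any number strictly larger than $\beta$, since $\varphi(\mu)+\varphi(\nu)+\Psi(\mu,\nu)\le \theta_1(\mu^\beta+\nu^\beta)+\mathcal{K}(\mu+\nu)\le \theta_2(1+\mu)^\gamma(1+\nu)^\gamma$ for a suitable $\theta_2$). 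This dual membership is precisely what makes the contradiction possible.

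The argument is by contradiction and is short. Suppose, for some $T>0$, that $\xi$ is a weak solution of \eqref{ConEq1}--\eqref{ConEqin1} on $[0,T)$ in the sense of Definition \ref{Condefinition}, with $\xi^{\mathrm{in}}\in\mathcal{Y}^+$ non-trivial. First I would invoke the inclusion \eqref{Conhyppmcs1} $\Rightarrow$ \textbf{(A)} so that Theorem \ref{InstaneousGelation} applies to $\xi$; it yields $T_{\mathrm{gel}}=0$, which by the definition of $T_{\mathrm{gel}}$ means there exists $t\in(0,T)$ with
\[
\mathcal{M}^1(t)=\int_0^\infty \mu\,\xi(\mu,t)\,d\mu \;<\; \int_0^\infty \mu\,\xi^{\mathrm{in}}(\mu)\,d\mu=\varrho_0.
\]
On the other hand, since $\Lambda$ has the special form \eqref{Conhyppmcs1}, Theorem \ref{ConMassTheorem} applies to the very same solution $\xi$ and gives $\mathcal{M}^1(t)=\varrho_0$ for all $0\le t<T$. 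These two statements are contradictory (here non-triviality of $\xi^{\mathrm{in}}$ is used to guarantee $\varrho_0>0$, so that the strict inequality is genuinely violated, and also to meet the hypotheses of Theorem \ref{InstaneousGelation} via Theorem \ref{theorem3}). Hence no such $\xi$ can exist, for any $T>0$.

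The only point requiring a word of care — and the place I would expect a referee to look — is the bookkeeping around the definition of $T_{\mathrm{gel}}$: strictly, $T_{\mathrm{gel}}=0$ asserts that the infimum of times at which mass has dropped is $0$, so for every $\varepsilon>0$ there is some $t\in(0,\varepsilon)$ with $\mathcal{M}^1(t)<\varrho_0$; picking any such $t<T$ is enough to collide with the conclusion $\mathcal{M}^1(t)=\varrho_0$ of Theorem \ref{ConMassTheorem}. No genuine obstacle arises, since both ingredient theorems have already been established for exactly the class of kernels under consideration; the corollary is essentially an immediate juxtaposition, and I would present it as such.
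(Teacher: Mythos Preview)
Your proposal is correct and follows essentially the same approach as the paper: assume a weak solution exists, apply Theorem~\ref{ConMassTheorem} to get mass conservation, observe that \eqref{Conhyppmcs1} implies hypothesis~\textbf{(A)} so Theorem~\ref{InstaneousGelation} gives $T_{\mathrm{gel}}=0$, and note the contradiction. Your write-up is somewhat more careful than the paper's (e.g.\ the explicit unpacking of $T_{\mathrm{gel}}=0$ via the infimum definition), but the argument is the same juxtaposition.
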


\begin{proof}
From Theorem \ref{ConMassTheorem}, we get that the solution $\xi(t)$ satisfies the mass conserving property \eqref{MassConseDisCon} for the class of coagulation kernels \eqref{Conhyppmcs1}. The class of coagulation kernels \eqref{Conhyppmcs1} satisfies the condition $\textbf{(A)}$. Hence, Theorem \ref{InstaneousGelation} implies that $T_{\mathrm{gel}}=0$. These both statements are completely opposite to each other. Hence the equation \eqref{ConEq1}--\eqref{ConEqin1} with \eqref{Conhyppmcs1} has no solution. This completes the proof.
\end{proof}

\section*{Acknowledgments}
This work was partially supported by Department of Science \& Techonolgy (DST), India- Deutscher Akademischer Austauschdienst (DAAD) within the Indo-German joint project entitled ``Analysis and Numerical Methods for Population Balance Equations''.

\end{document}